\theoremstyle{plain}
\newtheorem{proposition}{Proposition}
\newtheorem{theorem}[proposition]{Theorem}
\newtheorem{corollary}[proposition]{Corollary}
\theoremstyle{definition}
\newtheorem{definition}[proposition]{Definition}
\theoremstyle{definition}
\newtheorem{remark}[proposition]{Remark}
\numberwithin{equation}{section}
\numberwithin{proposition}{section}
\gdef\myletter{}
\let\savetheequation\theequation
\def\theequation{\savetheequation\myletter}
\newcommand{\CC}{{\mathbb C}}
\newcommand{\RR}{{\mathbb R}}
\newcommand{\PP}{{\mathbb P}}
\renewcommand{\Im}{\mbox{Im}}
\renewcommand{\Re}{\mbox{Re}}
\renewcommand{\date}{\today}
\def \bar{\overline}
\begin{document}

\vskip 3mm

\title[Pluripotential energy and large deviation]{\bf Pluripotential energy and large deviation }  

\author{T. Bloom* and N. Levenberg}{\thanks{*Supported in part by an NSERC of Canada grant}}
\subjclass{32U20,\ 32U15, \ 60G99}%
\keywords{large deviation principle, pluripotential energy}%

\address{University of Toronto, Toronto, Ontario M5S 2E4 Canada}  
\email{bloom@math.toronto.edu}

\address{Indiana University, Bloomington, IN 47405 USA}

\email{nlevenbe@indiana.edu}

\begin{abstract}
We generalize results from \cite{BL} relating pluripotential energy with the electrostatic energy of a measure given in \cite{[BBGZ]}. As a consequence, we obtain a large deviation principle for a canonical sequence of probability measures on a nonpluripolar compact set $K\subset \CC^n$. This is a special case of a result of R. Berman \cite{B}. For $n=1$, we include a proof that uses only standard  techniques of weighted potential theory.

\end{abstract}

\maketitle

\section{\bf Introduction.} \label{sec:intro}  

In \cite{[BBGZ]}, Berman, Boucksom, Guedj and Zeriahi defined the notion of {\it electrostatic energy} $E^*(\mu)$ associated with a probability measure $\mu$ on a compact K\"ahler manifold $X$ of dimension $n$. In \cite{BL}, after specializing to the case of a compact set $K\subset \CC^n\subset \PP^n$ we studied two related functionals $J(\mu), \ W(\mu)$ and their weighted counterparts $J^Q(\mu), \ W^Q(\mu)$. The functionals involve weak-* approximations of $\mu$ by discrete measures. The relation is given as Corollaries 5.7 and 5.8 in \cite{BL}. They give another interpretation of $E^*(\mu)$ which we called {\it pluripotential energy}.

\smallskip
Our main goals in this paper are Theorems \ref{wobsolete}, \ref{obsolete} and \ref{ldp}: 
\smallskip
\begin{enumerate}
\item Theorems \ref{wobsolete} and \ref{obsolete} sharpen and clarify the relation between $E^*$ (Definition \ref{eleven}) and the $J,J^Q$ and $W,W^Q$ functionals (Definitions \ref{jwmu} and \ref{jwmuq}) for measures with compact support $K\subset \CC^n$. In \cite{BL} these functionals were defined using a compact, convex set $H\supset K$, although their values were independent of the choice of $H$. In this paper the functionals are defined directly in terms of $K$; moreover $K$ can be any nonpluripolar compact set.
\item The functionals $J,J^Q$ and $W,W^Q$ can be defined using either a ``$\limsup$'' or a ``$\liminf$'' and the equality of these two, which we demonstrate in Theorems \ref{wobsolete} and  \ref{obsolete}, leads immediately to a large deviation principle (LDP) in Theorem \ref{ldp} {\it with rate function given in terms of these functionals}. In this setting there is a second approach to a LDP using a Legendre transform which leads to the $E^*$ functional; hence the relation between $E^*$ and the $J,J^Q$ and $W,W^Q$ functionals may be deduced from the {\it uniqueness} of the rate function (see Remark \ref{equival2}). 
\item A key ingredient in the proofs is a deep result of Berman, Boucksom and Nystrom \cite{BBN} on the convergence of the empirical measures of Fekete points, restated here as Theorem \ref{frombbn}. The transition from an energy of measures to discrete approximations of this energy yielding a LDP -- without utilizing a Legendre transform -- can be achieved in other potential-theoretic situations when one has the analogue of Theorem 2.4, cf., \cite{blang} for Angelsco ensembles. The authors are preparing a subsequent work on a LDP in a more general ``vector energy''  setting in the univariate case.
\item The utilization of a {\it strong Bernstein-Markov measure} (Definition \ref{strong}) is crucial for either approach to the LDP. In Corollary \ref{allbm} we show that {\it any} compact set admits a Bernstein-Markov measure and thus (Corollary  \ref{strongbm}) any {\it nonpluripolar} compact set admits a strong Bernstein-Markov measure.
\end{enumerate}

Theorem \ref{ldp} states a LDP 
for a canonical sequence $\{\sigma_k\}$ of measures on $\mathcal
{M}(K)$, the space of probability measures on $K$. The measures $\sigma_k$, which form a determinantal process, are the push-forwards of measures on products of $K$ and they are canonical in the sense that in the univariate case, i.e., $K\subset \CC$, they include the joint probability distributions of the eigenvalues of ensembles of unitary matrices restricted to $K$. Robert Berman \cite{B} has given two proofs of such a LDP in a more general context. In this paper we also include two proofs. Our first proof is similar in spirit to Berman's first proof in the sense that one establishes an ``easy'' upper bound and then a less-trivial lower bound. Here the effort is concealed in Theorems \ref{wobsolete} and \ref{obsolete}. Precisely, Theorem \ref{wobsolete} shows the equivalence of the appropriate form of $E^*(\mu)$ and an $L^{\infty}-$type discretization $W(\mu)$ for $\mu \in \mathcal M(K)$; the equivalence with the $L^2-$type discretization $J(\mu)$ follows for a $J$ functional defined with a Bernstein-Markov measure $\nu$. From this latter equivalence the LDP is obtained if $\nu$ is a strong Bernstein-Markov measure using a standard result (Theorem 4.1.11 \cite{DZ}). As mentioned in item (3) above, this procedure can be exploited in other settings. Indeed, in section 6 we indicate how in the univariate setting our approach relies solely on standard techniques and results from weighted potential theory. 

The second proof is the same as in Berman's work and was described to us by S\'ebastien Boucksom. It relies on a result in \cite{DZ} which states under general conditions that there is a LDP if a certain functional is Gateau differentiable; moreover the rate function is given as a Legendre transform. {\it In this setting the rate function is given directly in terms of $E^*$}. The differentiability follows from a fundamental result of Berman and Boucksom \cite{BBnew} restated here as Theorem \ref{bbdiff}. Indeed, in the multivariate setting, Theorem \ref{frombbn} is ultimately a consequence of Theorem \ref{bbdiff}.

Ben Arous and Guionnet \cite{BeG}, building on work of Voiculescu, first established an LDP for the Gaussian unitary ensemble (GUE). Their method was extended to more general ensembles by Hiai and Petz \cite{HP} and other authors. In the univariate case, the method utilized in each of the two proofs of the LDP on compact sets presented here are different from that utilized in \cite{BeG}. Furthermore, given that there
exist strong Bernstein-Markov measures which are not absolutely continuous with respect to Lebesgue measure (see Remark \ref{bmrem} and Corollary \ref{strongbm}), these methods give a LDP in the univariate case
not included in \cite{BeG} or \cite{HP}. Berman \cite{B} has proved a LDP on closed and unbounded sets by modifying his second proof in the compact case. This includes the GUE setting (see Remark \ref{gueremark}). 

\smallskip

\noindent{\bf Acknowledgements}. We thank Robert Berman and S\'ebastien Boucksom for valuable correspondence. Special thanks are due to the referee for many useful comments and suggestions on organization and exposition which greatly improved the paper.

\section{\bf Preliminaries.} \label{sec:prelims}	

In this section, we summarize important results in \cite{BBnew}, \cite{[BBGZ]} and \cite{BBN} adapted to our $\CC^n$ setting. We write $L(\CC^n)$ for the set of all plurisubharmonic (psh) functions $u$ on $\CC^n$ with the property that $u(z) - \log |z|$ is bounded above as $|z| \to \infty$ and
$$ L^+(\CC^n)=\{u\in L(\CC^n): u(z)\geq \log^+|z| + C\}$$
where $C$ is a constant depending on $u$. For locally bounded psh functions, e.g., for $u\in L^+(\CC^n)$, the complex Monge-Amp\`ere operator $(dd^cu)^n$ is well-defined as a positive measure. Here we normalize our definition of $dd^c=\frac{i}{\pi}\partial \bar \partial$ so that 
$$\int_{\CC^n} (dd^cu)^n=1 \ \hbox{for all} \ u\in L^+(\CC^n).$$
For arbitrary $u\in L(\CC^n)$ one can define the weak-* limit
$$NP(dd^cu)^n:=\lim_{j\to \infty} \bigl( {\bf 1}_{\{u >-j\}}\cdot (dd^c\max[u, -j]) ^n\bigr)$$
(cf., \cite{bedtay}). For $u\in L(\CC^n)$ with \begin{equation} \label{nonpp} \int_{\CC^n} NP(dd^cu)^n=1,\end{equation}
we write $(dd^cu)^n:= NP(dd^cu)^n$. As in \cite{BL}, {\sl we only consider $u\in L(\CC^n)$ satisfying (\ref{nonpp})}. For such $u$, $(dd^cu)^n$ puts no mass on pluripolar sets. 

Let $K\subset \CC^n$ be compact -- an assumption remaining in force throughout the paper -- and let $Q$ be a lowersemicontinuous function with
$\{z\in K:e^{-Q(z)}>0\}$ nonpluripolar (note that this implies $K$ be nonpluripolar). We denote the collection of such $Q$ as $\mathcal A(K)$ and we call $Q$ an {\it  admissible weight}. We define the {\it weighted pluricomplex Green function} $V^*_{K
,Q}(z):=\limsup_{\zeta \to z}V_{K
,Q}(\zeta)$ where
$$V_{K
,Q}(z):=\sup \{u(z):u\in L(\CC^n), \ u\leq Q \ \hbox{on} \ K
\}. $$
The case $Q\equiv 0$ is the ``unweighted'' case and we simply write $V_K
$. We have $V^*_{K
,Q}\in L^+(\CC^n)$ and we call the measure 
$$\mu_{K,Q}:=(dd^cV_{K,Q}^*)^n,$$
which has support in $K$, the {\it weighted equilibrium measure}; if $Q\equiv 0$ we write $\mu_{K}=(dd^cV_{K}^*)^n$. An example is the $n-$torus $T=\{z\in \CC^n: |z_j|=1, \ j=1,...,n\}$ where
$$V_T(z) =\max_j [\log |z_j|, 0] \ \hbox{and} \ \mu_T=\frac{1}{2\pi}d\theta\times \cdots \times \frac{1}{2\pi}d\theta. $$
We say $K
$ is locally regular if for each $z\in K
$ the unweighted pluricomplex Green function for the set $K
\cap \overline{ B(z,r)}$ is continuous for $r=r(z)>0$ sufficiently small. Here $B(z,r)$ denotes the Euclidean ball with center 
$z$ and radius $r$. We say $K$ is $L-$regular if $V_K =V_K^*$; i.e., $V_K$ is continuous. If $K
$ is locally regular and $Q$ is continuous, then $V_{K,Q}$ is continuous. In general, it is known that 
\begin{equation}\label{supprop}\hbox{supp}(\mu_{K,Q})\subset \{z\in K: V_{K,Q}^*(z)\geq Q(z)\}\end{equation}
and that $V_{K,Q}^*=Q$ on $\hbox{supp}(\mu_{K,Q})$ q.e., i.e., except perhaps for a pluripolar set.

The strictly psh function $u_0(z):=\frac{1}{2}\log (1+|z|^2)$ belongs to the class $L^+(\CC^n)$. For $u\in L^+(\CC^n)$ define
\begin{equation}\label{c1form}E(u):=\frac{1}{n+1}\int_{\CC^n}\sum_{j=0}^n (u-u_0) (dd^c u)^j\wedge (dd^c u_0)^{n-j}.\end{equation} 
The functional $E$ is a primitive for the complex Monge-Amp\`ere operator (see Propositions 4.1 and 4.4 of \cite{BBnew}).

For $Q\in \mathcal A(K)$, define
 $$P(Q):=V_{K,Q}^*.$$
 The composition of the $E$ and $P$ operators is Gateaux differentiable; this non-obvious result (Theorem \ref{bbdiff}) was proved by Berman and Boucksom in \cite{BBnew}. It is the key ingredient in proving Theorem \ref{frombbn}.
 
 \begin{theorem}\label{bbdiff} The functional $E\circ P$ is Gateaux differentiable; i.e., for $K\subset \CC^n$ nonpluripolar and $Q\in \mathcal A(K)$, $F(t):= (E\circ P)(Q+tv)$ is differentiable for all $v\in C(K)$ and $t\in \RR$. Furthermore,
$$F'(0)= \int_K v (dd^cP(Q))^n.$$ 
 \end{theorem}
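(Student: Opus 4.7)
The plan is to exploit two facts. First, the functional $E$ on $L^+(\CC^n)$ is a primitive for the complex Monge-Amp\`ere operator and, being concave along affine segments, satisfies the tangent inequality
\[
E(u_1) - E(u_0) \leq \int (u_1 - u_0)(dd^c u_0)^n
\]
for all $u_0, u_1 \in L^+(\CC^n)$. (The primitive property is Propositions 4.1 and 4.4 of \cite{BBnew}; concavity follows because $\frac{d^2}{dt^2}E(u_0 + t(u_1-u_0))$ equals $-n$ times the integral of the positive current $d(u_1-u_0)\wedge d^c(u_1-u_0)\wedge (dd^c u_t)^{n-1}$.) Second, the envelope $P(Q)=V^*_{K,Q}$ saturates its obstacle: by (\ref{supprop}), $P(Q) = Q$ q.e.\ on $\mathrm{supp}\,(dd^cP(Q))^n \subset K$, while $P(Q)\leq Q$ q.e.\ on $K$ (since $V_{K,Q}=V^*_{K,Q}$ off a pluripolar set). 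Set $F(t) = E(P(Q+tv))$; note $Q+tv \in \mathcal{A}(K)$ because $v \in C(K)$ is bounded, so $\{e^{-(Q+tv)}>0\}$ coincides with the nonpluripolar set $\{e^{-Q}>0\}$.

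Apply the tangent inequality with $u_0 = P(Q)$ and $u_1 = P(Q+tv)$: on $\mathrm{supp}\,(dd^cP(Q))^n$, the q.e.\ equalities give $P(Q+tv) - P(Q) \leq (Q+tv) - Q = tv$, and since $(dd^cP(Q))^n$ puts no mass on pluripolar sets,
\[
F(t)-F(0) \leq t \int_K v\,(dd^cP(Q))^n.
\]
Swapping the two functions and applying the same reasoning on $\mathrm{supp}\,(dd^cP(Q+tv))^n$, where $P(Q+tv) = Q+tv$ q.e., yields
\[
F(t) - F(0) \geq t \int_K v\,(dd^cP(Q+tv))^n.
\]
Dividing by $t$, with the appropriate sign flip for $t\to 0^\pm$, pinches the difference quotient between $\int v\,(dd^cP(Q))^n$ and $\int v\,(dd^cP(Q+tv))^n$.

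To close the gap, the direct envelope comparison $|V^*_{K,Q+tv} - V^*_{K,Q}| \leq |t|\,\|v\|_\infty$ -- each candidate for one envelope becomes a candidate for the other after an additive shift of size $|t|\,\|v\|_\infty$ -- gives $P(Q+tv) \to P(Q)$ uniformly on $\CC^n$ as $t \to 0$. By the Bedford-Taylor continuity of the Monge-Amp\`ere operator along uniformly convergent sequences of locally bounded psh functions, $(dd^cP(Q+tv))^n \to (dd^cP(Q))^n$ weakly, so $\int v\,(dd^cP(Q+tv))^n \to \int v\,(dd^cP(Q))^n$ since $v$ is continuous. This yields differentiability and the stated formula $F'(0) = \int_K v\,(dd^cP(Q))^n$. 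The main obstacle I anticipate is careful bookkeeping of the quasi-everywhere equalities to legitimize the tangent inequalities despite the nonlinearity of $P$; the conceptual heart is that $P(Q)$ saturates $Q$ on the support of its Monge-Amp\`ere mass, so the first-order effect of the perturbation reduces there to a rigid shift of the obstacle.
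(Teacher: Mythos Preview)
The paper does not supply its own proof of this theorem: it is quoted from Berman--Boucksom \cite{BBnew} as a ``non-obvious result'' and used as a black box. Your argument is correct and is essentially the proof given in \cite{BBnew}: the concavity of $E$ along affine segments yields the two-sided pinch
\[
t\int_K v\,(dd^cP(Q+tv))^n \ \le\ F(t)-F(0)\ \le\ t\int_K v\,(dd^cP(Q))^n
\]
via the saturation $P(Q)=Q$ q.e.\ on $\mathrm{supp}\,(dd^cP(Q))^n$, and the Lipschitz estimate $|P(Q+tv)-P(Q)|\le |t|\,\|v\|_\infty$ together with Bedford--Taylor continuity closes the gap. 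The bookkeeping with pluripolar sets is handled exactly as you indicate, since $(dd^cP(Q))^n$ and $(dd^cP(Q+tv))^n$ charge no pluripolar sets.
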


Next, for $k=1,2,...$ let $\mathcal P_k$ denote the space of holomorphic polynomials of degree at most $k$. We let dim${\mathcal P}_k=N_k={n+k\choose k}$ and $\sum_{j=1}^{N_k}deg(e_j)=\frac{n}{n+1}kN_k$ where $\{e_1,...,e_{N_k}\}$ is the standard monomial basis for ${\mathcal P}_k$. 
The weighted $k-$th order diameter of $K$ with $Q\in \mathcal A(K)$ is
$$\delta^{Q,k}(K):= \bigl(\max_{x_1,...,x_{N_k}\in K} |VDM_{k}(x_1,...,x_{N_k})| e^{-kQ(x_1)} \cdots e^{-kQ(x_{N_k})}\bigr)^{\frac{n+1}{nkN_k}}$$
where
\begin{equation}\label{vdmnot}VDM_{k}(x_1,...,x_{N_k})=\det [e_i(x_j) ]_{i,j=1,...,N_k}.\end{equation}
We introduce the shorthand notation
$$VDM_{k}^Q(x_1,...,x_{N_k}) :=VDM_{k}(x_1,...,x_{N_k}) e^{-kQ(x_1)} \cdots e^{-kQ(x_{N_k})}.$$ The limit  
\begin{equation}\label{wtdtd}\lim_{k\to \infty} \bigl(\delta^{Q,k}(K)\bigr)^{\frac{n}{n+1}}=:\bar \delta^Q (K)\end{equation}
exists (cf., \cite{BBnew} or \cite{BLtd}) and is called the (normalized) weighted transfinite diameter of $K$ with $Q\in \mathcal A(K)$; moreover $\bar \delta^Q(K)>0$ since $Q$ is admissible. The nonstandard $\frac{n}{n+1}$ in (\ref{wtdtd}) is to achieve a nicer {\it Rumely-type formula} (\cite{BBnew}, p. 383 and see \cite{rumely}):
\begin{equation}\label{keyrel}-\log \bar \delta^{Q}(K)= E( V_{K,Q}^*)-E(V_T).\end{equation}

\begin{definition} \label{eleven}The {\it electrostatic} or {\it pluripotential energy} $E^*(\mu)$ of a measure $\mu\in \mathcal M(K)$ from \cite{[BBGZ]} is a Legendre-type transform of the functional $E$:
  \begin{equation} \label{estar} E^*(\mu)=\sup_{Q\in  C(K)}[E( V_{K,Q}^*)-\int_K Qd\mu] +\int_Ku_0d\mu
  \end{equation}
(see also Proposition 5.6 in \cite{BL}). \end{definition}

In \cite{[BBGZ]} it is shown that $\mu_{K,Q}$ minimizes the functional
\begin{equation}\label{wtenmin} E^*(\mu)+\int_K (Q-u_0)d\mu \end{equation}
over all $\mu\in \mathcal M(K)$; the minimal value is $E(V^*_{K,Q})$. This gives a remarkable generalization of the univariate setting (see Remark \ref{remen} and section 6). Moreover, in \cite{[BBGZ]}, a variational approach to the following special case of a result of Guedj and Zeriahi \cite{GZ} was given.

\begin{theorem}\label{guedjzer} Let $\mu \in  \mathcal M(K)$ with $E^*(\mu)<+\infty$. Then there exists $u\in L(\CC^n)$ with $(dd^cu)^n=\mu$ and $\int_K ud\mu>-\infty$.

\end{theorem}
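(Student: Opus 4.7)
I will follow the variational approach of Berman–Boucksom–Guedj–Zeriahi. The key point is that the definition (\ref{estar}) presents $E^*(\mu)$ as a Legendre transform, so one expects the ``dual optimizer'' to furnish the desired psh function $u$. Concretely, the finiteness $E^*(\mu)<+\infty$ says that
$$\Phi(Q) := E(V^*_{K,Q}) - \int_K Q\, d\mu$$
is bounded above on $C(K)$. My plan is to produce a maximizer (in a suitable larger class), then exploit the Gateaux differentiability of $E\circ P$ (Theorem \ref{bbdiff}) to read off the Monge--Amp\`ere equation from the Euler--Lagrange condition.

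First I would take a maximizing sequence $Q_j\in C(K)$ for $\Phi$ and pass to the associated psh envelopes $u_j := V^*_{K,Q_j}\in L^+(\CC^n)$. Since $u_j\le Q_j$ on $K$ quasi-everywhere (with equality on $\mathrm{supp}(\mu_{K,Q_j})$ q.e.) and $E$ is monotone, it will be convenient to recast the problem as maximizing
$$J(u) := E(u) - \int_K u\, d\mu$$
over the finite-energy class $\mathcal E^1(K)$ of psh functions $u\in L(\CC^n)$ with $E(u)>-\infty$. A short computation shows $\Phi(Q_j)\le J(u_j)$, so $(u_j)$ is a maximizing sequence for $J$ as well, and $\sup J<+\infty$ by the hypothesis.

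Next I would extract a limit $u_\ast \in \mathcal E^1(K)$. The standard tool is that bounded-energy families are relatively compact in the $L^1_{\mathrm{loc}}$ topology (Hartogs-type compactness for psh functions plus uniform integrability coming from finite energy), while $E$ is upper semicontinuous and $\mu \mapsto \int u\,d\mu$ is lower semicontinuous along such sequences provided $E^*(\mu)<+\infty$ (this uses the Cauchy--Schwarz-type inequality of BBGZ controlling $\int u\,d\mu$ by $E(u)$ and $E^*(\mu)$). These ingredients together give the existence of a maximizer $u_\ast\in\mathcal E^1(K)$, and the same inequality delivers $\int_K u_\ast\, d\mu > -\infty$.

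Finally, for any $v\in C(K)$, set $Q_\ast:=u_\ast|_K$ (interpreted through the envelope $P(Q_\ast)=u_\ast$) and consider $t\mapsto \Phi(Q_\ast+tv)$. By Theorem \ref{bbdiff} this is differentiable at $0$ with derivative
$$\int_K v\, (dd^c u_\ast)^n - \int_K v\, d\mu,$$
which must vanish since $Q_\ast$ is a maximizer. Varying $v\in C(K)$ gives $(dd^c u_\ast)^n = \mu$, completing the proof. The main obstacle is the existence of the maximizer: one needs the right topology on $\mathcal E^1(K)$ so that $E$ is upper semicontinuous and the linear term is lower semicontinuous simultaneously. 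Everything else is a clean application of Theorem \ref{bbdiff} and the energy estimates of BBGZ.
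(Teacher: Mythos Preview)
The paper does not give its own proof of this theorem: it is stated as a citation of Guedj--Zeriahi \cite{GZ}, with the remark that \cite{[BBGZ]} supplies a variational proof. Your sketch is indeed the outline of the BBGZ variational argument, and the overall strategy---maximize $J(u)=E(u)-\int_K u\,d\mu$ over a finite-energy class, then read off $(dd^cu_\ast)^n=\mu$ from the first-order condition---is the correct one.

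There is, however, a genuine gap in your Euler--Lagrange step. You propose to set $Q_\ast:=u_\ast|_K$ and invoke Theorem~\ref{bbdiff} at $Q_\ast$. But Theorem~\ref{bbdiff} is stated for $Q\in\mathcal A(K)$, i.e., $Q$ \emph{lowersemicontinuous}, whereas $u_\ast$ is plurisubharmonic and hence $u_\ast|_K$ is only \emph{upper}semicontinuous. So Theorem~\ref{bbdiff} as formulated here does not apply at $Q_\ast$, and the derivative computation you write down is not justified by the tools available in this paper. In the actual BBGZ argument this is handled by working directly with the differentiability of $E$ along paths inside the finite-energy class $\mathcal E^1$ (not through the projection $P$ at a fixed usc weight), together with a one-sided comparison: for $v\in C(K)$ one compares $u_\ast$ with $P(u_\ast+tv)$ and uses concavity/monotonicity of $E$ to extract the inequality $\int_K v\,(dd^cu_\ast)^n\ge\int_K v\,d\mu$, then reverses the sign of $v$. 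Your identification of the compactness/semicontinuity step as the ``main obstacle'' is fair, but the first-order step also requires more than a direct citation of Theorem~\ref{bbdiff}. A secondary point: the identity $P(u_\ast|_K)=u_\ast$ that you invoke does hold for a maximizer, but it needs a line of argument (use $u_\ast\le P(u_\ast|_K)$, monotonicity and strictness of $E$, and that $\mu$ does not charge pluripolar sets when $E^*(\mu)<+\infty$).
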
	
	Finally, we recall a crucial result from \cite{BBN} on ``asymptotic weighted Fekete arrays."  

\begin{theorem} \label{frombbn} Let $K\subset {\CC}^n$ be nonpluripolar and let $Q\in \mathcal A(K)$. For each $k$, take $N_k$ points $x_1^{(k)},...,x_{N_k}^{(k)}\in K$ for which
$$\lim_{k\to \infty} \bigl(|VDM_k^Q(x_1^{(k)},...,x_{N_k}^{(k)})|\bigr)^{\frac{1}{kN_k}} =\bar \delta^Q(K).$$
Then $\nu_k:= \frac{1}{N_k}\sum_{j=1}^{N_k} \delta_{x_j^{(k)}}\to \mu_{K,Q}$ weak-*.
\end{theorem}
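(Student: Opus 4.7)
The plan is a standard perturbation argument using the Gateaux differentiability of $E\circ P$ and the Rumely-type formula (\ref{keyrel}). By compactness of $\mathcal{M}(K)$ in the weak-* topology, it suffices to show that every weak-* limit point of $\{\nu_k\}$ equals $\mu_{K,Q}$. So I pass to a subsequence (still denoted $\nu_k$) with $\nu_k \to \nu$ weak-*, and aim to prove $\int_K v\, d\nu = \int_K v\, d\mu_{K,Q}$ for every $v\in C(K)$.

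The key identity is the trivial factorization
\[
|VDM_k^{Q+tv}(x_1^{(k)},\ldots,x_{N_k}^{(k)})| = |VDM_k^{Q}(x_1^{(k)},\ldots,x_{N_k}^{(k)})|\, e^{-kt\sum_j v(x_j^{(k)})},
\]
which after taking $\frac{1}{kN_k}\log$ becomes
\[
\frac{1}{kN_k}\log |VDM_k^{Q+tv}(x_1^{(k)},\ldots,x_{N_k}^{(k)})| = \frac{1}{kN_k}\log|VDM_k^Q(x_1^{(k)},\ldots,x_{N_k}^{(k)})| - t\int_K v\, d\nu_k.
\]
On one hand, the left side is bounded above by $\frac{1}{kN_k}\log \max_{y}|VDM_k^{Q+tv}(y)|$, which by (\ref{wtdtd}) tends to $\log\bar\delta^{Q+tv}(K)$. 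On the other hand, the asymptotic-Fekete hypothesis on $(x_j^{(k)})$ implies that the first term on the right tends to $\log\bar\delta^Q(K)$. Passing to the limit and invoking (\ref{keyrel}) in the form $\log\bar\delta^R(K) = E(V_T) - (E\circ P)(R)$ for $R = Q$ and $R = Q+tv$, the inequality becomes
\[
(E\circ P)(Q+tv) - (E\circ P)(Q) \le t\int_K v\, d\nu.
\]

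Now apply this with both signs of $t$: dividing by $t>0$ and letting $t\to 0^+$ gives $F'(0)\le \int_K v\, d\nu$, and dividing by $t<0$ and letting $t\to 0^-$ gives the reverse inequality, where $F(t):=(E\circ P)(Q+tv)$. By Theorem \ref{bbdiff} this derivative exists and equals $\int_K v\,(dd^c P(Q))^n = \int_K v\, d\mu_{K,Q}$. Hence $\int_K v\, d\nu = \int_K v\, d\mu_{K,Q}$ for every $v\in C(K)$, so $\nu = \mu_{K,Q}$ and the full sequence converges.

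The conceptual obstacle has been packaged into the black box Theorem \ref{bbdiff}: the differentiability of $E\circ P$ is genuinely deep and is the heart of the BBN result. Once that is granted, the argument above is a clean two-sided comparison that plays the asymptotic maximality of $|VDM_k^Q|$ at the base weight $Q$ against the (non-optimal but still admissible) value at the perturbed weight $Q+tv$; the Rumely formula converts the transfinite-diameter comparison into a comparison of $E\circ P$, and Gateaux differentiability forces the squeeze.
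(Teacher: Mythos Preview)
Your argument is correct and is in fact the standard proof from \cite{BBN}. Note, however, that the present paper does not give its own proof of Theorem~\ref{frombbn}: it is stated as a result \emph{recalled} from \cite{BBN}, with the remark in the introduction that ``in the multivariate setting, Theorem~\ref{frombbn} is ultimately a consequence of Theorem~\ref{bbdiff}.'' Your write-up is precisely the mechanism behind that remark: the perturbation identity for $VDM_k^{Q+tv}$, the maximality bound converting to $\bar\delta^{Q+tv}(K)$, the Rumely formula (\ref{keyrel}) turning this into a one-sided inequality for $(E\circ P)(Q+tv)-(E\circ P)(Q)$, and then the two-sided squeeze via Gateaux differentiability (Theorem~\ref{bbdiff}). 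So you have not diverged from the paper's approach---you have filled in the proof the paper chose to outsource to \cite{BBN}, and done so along exactly the lines the paper indicates.
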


\section{\bf Bernstein-Markov Property.}\label{sec:bmp}

Given a compact set $K\subset \CC^n$ and a measure $\nu$ on $K$, we say that $(K,\nu)$ satisfies a Bernstein-Markov property if for all $p_k\in \mathcal P_k$, 	
$$||p_k||_K:=\sup_{z\in K} |p_k(z)|\leq  M_k||p_k||_{L^2(\nu)}  \ \hbox{with} \ \limsup_{k\to \infty} M_k^{1/k} =1.$$
More generally, for $K\subset \CC^n$ compact, $Q\in \mathcal A(K)$, and $\nu$ a measure on $K$, we say that the triple $(K,\nu,Q)$ satisfies a weighted Bernstein-Markov property if for all $p_k\in \mathcal P_k$, 
$$||e^{-kQ}p_k||_K \leq M_k ||e^{-kQ}p_k||_{L^2(\nu)} \ \hbox{with} \ \limsup_{k\to \infty} M_k^{1/k} =1.$$ 
If $K$ is locally regular and $Q$ is continuous, for $\nu = (dd^cV_{K,Q})^n$ it is known that $(K,\nu,Q)$ satisfies a weighted Bernstein-Markov property \cite{bloomweight}. For $Q=0$ and $K$ $L-$regular this was proved in \cite{NTZ}. 

For $\nu\in \mathcal M(K)$ and $Q\in \mathcal A(K)$, define 
\begin{equation}\label{wtdzn}Z_k:=Z_k(K,Q,\nu):= \int_K \cdots \int_K |VDM_k^Q(z_1,...,z_{N_k}
)|^2 d\nu(z_1) \cdots d\nu(z_{N_k}).\end{equation}
The following result is from \cite{BBnew} (see also \cite{BBN}).

\begin{proposition}
\label{weightedtd}
Let $K\subset \CC^n$ be a compact set and let $Q\in \mathcal A(K)$. If $\nu$ is a measure on $K$ with $(K,\nu,Q)$ satisfying a weighted Bernstein-Markov property, then 
\begin{equation}
\label{zeen}\lim_{k\to \infty} Z_k^{\frac{1}{2kN_k}
} =  \bar \delta^Q(K).\end{equation}\end{proposition}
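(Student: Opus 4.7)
The plan is to sandwich $Z_k^{1/(2kN_k)}$ between quantities that both converge to $\bar\delta^Q(K)$, invoking the definition (\ref{wtdtd}). The upper bound is elementary: bounding the integrand in (\ref{wtdzn}) pointwise gives $Z_k \le \nu(K)^{N_k}(\max_{K^{N_k}}|VDM_k^Q|)^2$, so taking $(2kN_k)$-th roots and using $\nu(K)^{1/(2k)}\to 1$ together with (\ref{wtdtd}) yields $\limsup_k Z_k^{1/(2kN_k)} \le \bar\delta^Q(K)$.

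The main step is the lower bound, where the weighted Bernstein-Markov hypothesis does the work. The key observation is that for each index $j$, the function $z_j \mapsto VDM_k^Q(z_1,\ldots,z_{N_k})$ (with the other variables frozen) equals $e^{-kQ(z_j)}$ times a polynomial in $z_j$ of degree at most $k$, obtained by expanding the determinant in (\ref{vdmnot}) along the $j$-th column, multiplied by the constant $\prod_{i\neq j} e^{-kQ(z_i)}$. Applying the weighted Bernstein-Markov property in $z_j$ and squaring then gives
$$|VDM_k^Q(z_1,\ldots,z_{N_k})|^2 \le M_k^2 \int_K |VDM_k^Q(z_1,\ldots,z_{j-1},w,z_{j+1},\ldots,z_{N_k})|^2 \, d\nu(w)$$
for every fixed choice of the remaining variables. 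Starting from a maximizer $(\xi_1,\ldots,\xi_{N_k})$ of $|VDM_k^Q|^2$, I would apply this pointwise estimate in $z_1$, integrate the resulting bound against $d\nu(\xi_1)$, apply BM in $z_2$ pointwise in the remaining frozen variables, integrate against $d\nu(\xi_2)$, and iterate; after $N_k$ steps Fubini assembles the result into $(\max_{K^{N_k}}|VDM_k^Q|)^2 \le M_k^{2N_k} Z_k$.

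Taking $(2kN_k)$-th roots and using $\limsup_k M_k^{1/k}=1$ along with (\ref{wtdtd}) then gives $\liminf_k Z_k^{1/(2kN_k)} \ge \bar\delta^Q(K)$, which combined with the upper bound completes the proof. No step presents a substantive obstacle; the only point requiring care is the bookkeeping in the iterative Bernstein-Markov step (applying BM in the next free variable pointwise before integrating over variables that have already been introduced), but this is a standard sup-vs-$L^2$ discretization argument requiring no new ideas.
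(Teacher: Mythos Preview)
The paper does not supply a proof of this proposition; it simply attributes the result to \cite{BBnew} (see also \cite{BBN}). Your argument is correct and is essentially the standard proof one finds in those references: a trivial pointwise upper bound on the integrand in $Z_k$, and for the lower bound the iterated Bernstein--Markov estimate yielding $(\max_{K^{N_k}}|VDM_k^Q|)^2 \le M_k^{2N_k} Z_k$, after which (\ref{wtdtd}) and $M_k^{1/k}\to 1$ finish the job.

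One cosmetic remark: your description of the iteration (``integrate the resulting bound against $d\nu(\xi_1)$'') is slightly garbled, since after applying Bernstein--Markov in $z_1$ at the maximizer the bound no longer involves $\xi_1$. What you actually do, and what your parenthetical remark states correctly, is: having replaced $\xi_1$ by an integration variable $z_1$, apply the Bernstein--Markov inequality in the second variable \emph{pointwise} in $z_1$ (with $\xi_3,\ldots,\xi_{N_k}$ still frozen), then integrate in $z_1$; this gives
\[
|VDM_k^Q(\xi_1,\ldots,\xi_{N_k})|^2 \le M_k^{4}\int_K\int_K |VDM_k^Q(z_1,z_2,\xi_3,\ldots,\xi_{N_k})|^2\, d\nu(z_1)\, d\nu(z_2),
\]
and induction completes the bound. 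This is exactly the ``sup versus $L^2$'' bookkeeping you flagged, and it goes through without difficulty.
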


Given $\nu$ as in Proposition \ref{weightedtd}, we define a probability measure $Prob_k$ on $K^{N_k}$ via, for a Borel set $A\subset K^{N_k}$,
\begin{equation}\label{probk}Prob_k(A):=\frac{1}{Z_k}\cdot \int_A  |VDM_k^Q(z_1,...,z_{N_k
})|^2 \cdot d\nu(z_1) \cdots d\nu(z_{N_k
}).\end{equation}
We immediately obtain the following (cf., Proposition 3.3 of \cite{BLtd}).
		
\begin{corollary} \label{largedev} Given $\eta >0$, define
 \begin{equation}\label{aketa}A_{k,\eta}:=\{(z_1,...,z_{N_k})\in K^{N_k}: |VDM_k^Q(z_1,...,z_{N_k})|^2 \geq (\bar \delta^Q(K) -\eta)^{2kN_k}\}.\end{equation}
Then there exists $k^*=k^*(\eta)$ such that for all $k>k^*$, 
$$Prob_k(K^{N_k}\setminus A_{k,\eta})\leq (1-\frac{\eta}{2\bar \delta^Q(K)})^{2kN_k}.$$
	\end{corollary}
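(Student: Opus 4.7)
The plan is to exploit the trivial upper bound on $|VDM_k^Q|^2$ valid on the complement of $A_{k,\eta}$, and combine it with the asymptotic lower bound on $Z_k$ supplied by Proposition \ref{weightedtd}.

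First, from the very definition of $A_{k,\eta}$ in (\ref{aketa}), on $K^{N_k}\setminus A_{k,\eta}$ we have the pointwise bound $|VDM_k^Q(z_1,\dots,z_{N_k})|^2 < (\bar\delta^Q(K)-\eta)^{2kN_k}$. Plugging this into (\ref{probk}) and using that $\nu$ is a probability measure on $K$, so $\nu^{N_k}(K^{N_k}\setminus A_{k,\eta}) \leq 1$, yields
$$Prob_k(K^{N_k}\setminus A_{k,\eta}) \;\leq\; \frac{(\bar\delta^Q(K)-\eta)^{2kN_k}}{Z_k}.$$

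Second, by Proposition \ref{weightedtd} (which applies because $(K,\nu,Q)$ satisfies the weighted Bernstein-Markov property), $Z_k^{1/(2kN_k)} \to \bar\delta^Q(K)$ as $k\to\infty$. Hence there exists $k^*=k^*(\eta)$ such that for all $k>k^*$,
$$Z_k^{1/(2kN_k)} \;\geq\; \bar\delta^Q(K) - \tfrac{\eta}{2}, \qquad \text{i.e.,} \qquad Z_k \;\geq\; \bigl(\bar\delta^Q(K) - \tfrac{\eta}{2}\bigr)^{2kN_k}.$$

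Combining the two estimates gives, for $k>k^*$,
$$Prob_k(K^{N_k}\setminus A_{k,\eta}) \;\leq\; \left(\frac{\bar\delta^Q(K)-\eta}{\bar\delta^Q(K)-\eta/2}\right)^{\!2kN_k} \;=\; \left(1 - \frac{\eta/2}{\bar\delta^Q(K)-\eta/2}\right)^{\!2kN_k}.$$
Finally, since $\bar\delta^Q(K)-\eta/2 \leq \bar\delta^Q(K)$, we have $\frac{\eta/2}{\bar\delta^Q(K)-\eta/2} \geq \frac{\eta/2}{\bar\delta^Q(K)} = \frac{\eta}{2\bar\delta^Q(K)}$, and hence the stated bound
$$Prob_k(K^{N_k}\setminus A_{k,\eta}) \;\leq\; \Bigl(1 - \tfrac{\eta}{2\bar\delta^Q(K)}\Bigr)^{2kN_k}$$
follows. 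There is no real obstacle here: the whole content of the corollary is to package Proposition \ref{weightedtd} together with the trivial pointwise bound on $|VDM_k^Q|^2$ outside $A_{k,\eta}$, which is why the authors label it as an immediate consequence.
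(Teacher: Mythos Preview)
Your proof is correct and is essentially identical to the paper's own argument: bound the integrand on $K^{N_k}\setminus A_{k,\eta}$ by $(\bar\delta^Q(K)-\eta)^{2kN_k}$, use Proposition \ref{weightedtd} to bound $Z_k$ from below by $(\bar\delta^Q(K)-\epsilon)^{2kN_k}$ for large $k$, and take $\epsilon=\eta/2$. The only difference is that you spell out the final algebraic simplification $\frac{\bar\delta^Q(K)-\eta}{\bar\delta^Q(K)-\eta/2}\le 1-\frac{\eta}{2\bar\delta^Q(K)}$, which the paper leaves implicit.
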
	
	
	\begin{proof} From Proposition \ref{weightedtd}, given $\epsilon >0$, 
	$$Z_k \geq [\bar \delta^Q(K) -\epsilon]^{2kN_k}$$
	for $k\geq k(\epsilon)$. Thus
	$$Prob_k(K^{N_k}\setminus A_{k,\eta})=$$
	$$\frac{1}{Z_k}\int_{K^{N_k}\setminus A_{k,\eta}} |VDM_k^Q(z_1,...,z_{N_k})|^2d\nu(z_1) \cdots d\nu(z_{N_k})$$
	$$\leq \frac{  [\bar \delta^Q(K) -\eta]^{2kN_k}} { [\bar \delta^Q(K) -\epsilon]^{2kN_k}}$$
	if $k\geq k(\epsilon)$. Choosing $\epsilon < \eta/2$ and $k^*=k(\epsilon)$ gives the result.
	\end{proof}
	
	Using (\ref{probk}), we get an induced probability measure ${\bf P}$ on the infinite product space of arrays $\chi:=\{X=\{x_j^{(k)}\}_{k=1,2,...; \ j=1,...,N_k}: x_j^{(k)} \in K\}$: 
	$$(\chi,{\bf P}):=\prod_{k=1}^{\infty}(K^{N_k},Prob_k).$$
	
\begin{corollary} \label{niceprob} 
Let $(K,\nu,Q)$ satisfy a weighted Bernstein-Markov property. For ${\bf P}$-a.e. array $X=\{x_j^{(k)}\}\in \chi$, 
$$\nu_k :=\frac{1}{N_k}\sum_{j=1}^{N_k}\delta_{x_j^{(k)}}\to \mu_{K,Q} \ \hbox{weak-*}.$$
\end{corollary}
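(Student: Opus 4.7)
The plan is to combine the exponential probability estimate from Corollary \ref{largedev} with a Borel--Cantelli argument, and then invoke Theorem \ref{frombbn} to identify the limit of $\nu_k$.

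First, fix $\eta>0$. Corollary \ref{largedev} gives, for $k$ sufficiently large,
$$Prob_k(K^{N_k}\setminus A_{k,\eta}) \leq \bigl(1-\tfrac{\eta}{2\bar\delta^Q(K)}\bigr)^{2kN_k}.$$
Since $N_k\to\infty$, the right-hand side is summable in $k$, so
$$\sum_{k=1}^{\infty}Prob_k(K^{N_k}\setminus A_{k,\eta})<\infty.$$
Viewing the events $\{X:(x_1^{(k)},\dots,x_{N_k}^{(k)})\in K^{N_k}\setminus A_{k,\eta}\}$ as cylinder sets in the product probability space $(\chi,{\bf P})$, the Borel--Cantelli lemma implies that for ${\bf P}$-a.e.\ array $X\in\chi$, the coordinate vector $(x_1^{(k)},\dots,x_{N_k}^{(k)})$ lies in $A_{k,\eta}$ for all but finitely many $k$.

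Next I would remove the dependence on $\eta$: pick a sequence $\eta_m\searrow 0$ and intersect the corresponding sets of full ${\bf P}$-measure. This gives a set $\chi_0\subset\chi$ of full measure with the property that for every $X\in\chi_0$,
$$\liminf_{k\to\infty}|VDM_k^Q(x_1^{(k)},\dots,x_{N_k}^{(k)})|^{1/(kN_k)}\geq \bar\delta^Q(K).$$
On the other hand, the very definition of the weighted $k$-th order diameter forces
$$|VDM_k^Q(x_1^{(k)},\dots,x_{N_k}^{(k)})|^{1/(kN_k)} \leq \bigl(\delta^{Q,k}(K)\bigr)^{n/(n+1)},$$
and the right-hand side converges to $\bar\delta^Q(K)$ by (\ref{wtdtd}). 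Consequently, for every $X\in\chi_0$,
$$\lim_{k\to\infty}|VDM_k^Q(x_1^{(k)},\dots,x_{N_k}^{(k)})|^{1/(kN_k)} = \bar\delta^Q(K),$$
which is exactly the definition of an asymptotic weighted Fekete array.

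The conclusion is then immediate from Theorem \ref{frombbn}: every such array satisfies $\nu_k=\frac{1}{N_k}\sum_{j=1}^{N_k}\delta_{x_j^{(k)}}\to\mu_{K,Q}$ in the weak-* topology. I do not anticipate a serious obstacle; the only point requiring minor care is ensuring independence of the cylinder events across $k$ so that Borel--Cantelli applies directly from the product structure of ${\bf P}$, and choosing a countable sequence $\eta_m\to 0$ so as to upgrade the statement from a fixed $\eta$ to a true limit.
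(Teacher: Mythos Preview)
Your proof is correct and follows essentially the same route as the paper: apply Corollary~\ref{largedev} to get summable tail probabilities, invoke Borel--Cantelli, and then appeal to Theorem~\ref{frombbn}. One small remark: the first Borel--Cantelli lemma requires no independence whatsoever, so your closing caveat about ``ensuring independence of the cylinder events'' is unnecessary---summability of ${\bf P}(E_k)$ alone gives ${\bf P}(\limsup E_k)=0$.
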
  
\begin{proof} (cf., \cite{hpbook}, p. 211) From Theorem \ref{frombbn} it suffices to verify for ${\bf P}$-a.e.
\begin{equation}\label{borcan}\liminf_{k\to \infty} \bigl(|VDM_k^Q(x_1^{(k)},...,x_{N_k}^{(k)})|\bigr)^{\frac{1}{kN_k}}  = \bar \delta^Q(K).\end{equation}
Given $\eta >0$, the condition that for a given array $X=\{x_j^{(k)}\}$ we have
$$\liminf_{k\to \infty} \bigl(|VDM_k^Q(x_1^{(k)},...,x_{N_k}^{(k)})|\bigr)^{\frac{1}{kN_k}} \leq \bar \delta^Q(K)-\eta$$
means that $(x_1^{(k)},...,x_{N_k}^{(k)})\in K^{N_k}\setminus A_{k,\eta}$ for infinitely many $k$. Thus setting 
$$E_k:=\{X \in \chi: (x_1^{(k)},...,x_{N_k}^{(k)})\in K^{N_k}\setminus A_{k,\eta}\},$$
we have
$${\bf P}(E_k)\leq Prob_k(K^{N_k}\setminus A_{k,\eta})\leq (1-\frac{\eta}{2\bar \delta^Q(K)})^{2kN_k}$$
and $\sum_{k=1}^{\infty} {\bf P}(E_k)<+\infty$. By the Borel-Cantelli lemma, 
$${\bf P}(\limsup_{k\to \infty} E_k)=0.$$ Thus, with probability one, only finitely many $E_k$ occur,  and (\ref{borcan}) follows.
\end{proof}

A stronger version of Corollary \ref{niceprob} will be given in section \ref{sec:ld}. We now show  that every compact set admits a measure satisfying a Bernstein-Markov property. Indeed, the following stronger statement is true.

\begin{proposition} \label{preallbm} Let $K\subset \RR^n$. There exists a measure $\nu \in \mathcal M(K)$ such that for all complex-valued polynomials $p$ of degree at most $k$ in the (real) coordinates $x=(x_1,...,x_n)$ we have
$$||p||_K\leq M_k ||p||_{L^2(d\nu)}$$
where $\lim_{k\to \infty}M_k^{1/k}=1$.

\end{proposition}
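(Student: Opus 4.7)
The plan is to construct $\nu$ as a discrete measure supported on Fekete-type interpolation points for each polynomial degree, with carefully chosen weights. Let $V_k$ denote the complex vector space of polynomials of degree $\leq k$ in the real coordinates $x_1,\dots,x_n$, and set $d_k := \dim\{p|_K : p \in V_k\}$. Then $d_k \leq \binom{n+k}{n}$, so in particular $d_k^{1/k} \to 1$.

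For each $k$, choose a basis $\{b_1,\dots,b_{d_k}\}$ of $V_k$ modulo its kernel on $K$, and select points $y_1^{(k)},\dots,y_{d_k}^{(k)} \in K$ maximizing $|\det[b_i(y_j)]|_{i,j}|$. By Cramer's rule, the associated Lagrange basis $\ell_j^{(k)}$, characterized by $\ell_j^{(k)}(y_i^{(k)}) = \delta_{ij}$ on $K$, can be written as a ratio of such determinants, and the Fekete maximality forces
$$\|\ell_j^{(k)}\|_K \leq 1.$$
Since every $p \in V_k$ agrees on $K$ with $\sum_{j} p(y_j^{(k)})\,\ell_j^{(k)}$, this yields the Marcinkiewicz-type estimate
$$\|p\|_K \leq d_k \max_{1 \leq j \leq d_k} |p(y_j^{(k)})|.$$

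Next define
$$\nu_0 := \sum_{k=0}^{\infty} c_k \sum_{j=1}^{d_k} \delta_{y_j^{(k)}}, \qquad c_k := \frac{1}{(k+1)^2 d_k},$$
so that $\nu_0(K) = \sum_{k}(k+1)^{-2} < \infty$, and let $\nu := \nu_0/\nu_0(K) \in \mathcal M(K)$. For any $p \in V_k$, keeping only the $k$-th level in the sum defining $\nu_0$ gives
$$\|p\|_{L^2(\nu)}^2 \geq \nu_0(K)^{-1}\, c_k\, \max_{j} |p(y_j^{(k)})|^2,$$
which combined with the Fekete bound above yields $\|p\|_K \leq M_k \|p\|_{L^2(\nu)}$ with $M_k := \nu_0(K)^{1/2}\, d_k\, c_k^{-1/2}$. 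Since $d_k^{1/k} \to 1$ and $c_k^{1/k} = ((k+1)^2 d_k)^{-1/k} \to 1$, we get $M_k^{1/k} \to 1$.

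The proof is essentially bookkeeping once one has the Fekete-Lagrange inequality $\|\ell_j\|_K \leq 1$; the whole game is balancing the weights $c_k$ so that $\nu_0$ is finite (which requires $\sum c_k d_k < \infty$) while $c_k$ decays no faster than any exponential (so that $c_k^{-1/(2k)} \to 1$). Any polynomially decaying choice such as the one above works. The one subtlety worth flagging is that, for small $K$ (e.g.\ lying in a proper algebraic subvariety), $V_k$ may have a nontrivial kernel on $K$, so one must work with the quotient when defining Fekete points; the bound $d_k \leq \binom{n+k}{n}$ still holds and the argument is unaffected.
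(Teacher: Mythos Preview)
Your proof is correct and follows essentially the same approach as the paper's: both build $\nu$ as a weighted sum of point masses at Fekete points of each order, use the Lagrange interpolation bound $\|\ell_j^{(k)}\|_K\le 1$ to control $\|p\|_K$ by the values $|p(y_j^{(k)})|$, and then observe that the polynomially decaying weights guarantee $M_k^{1/k}\to 1$. The only cosmetic differences are that the paper passes through $L^1$ and the full sum $\sum_j |p(x_j^{(k)})|$ rather than the maximum, yielding $M_k = m_k k^2/c$ in place of your $M_k = \nu_0(K)^{1/2} d_k^{3/2}(k+1)$; either estimate is polynomial in $k$ and suffices.
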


 \begin{proof}  To construct $\nu$, we first observe that if $K$ is a finite set, any measure $\nu$ which puts positive mass at each point of $K$ will work. If $K$ has infinitely many points, for each $k=1,2,...$ let $m_k =$dim$\mathcal P_k(K)$ where $\mathcal P_k(K)$ denotes the complex-valued polynomials on $\RR^{n}$ of degree at most $k$ restricted to $K$. Then $\lim_{k\to \infty} m_k =\infty$ and $m_k\leq {n+k\choose k}=0(k^n)$. For each $k$, let 
 $$\nu_k :=\frac{1}{m_k} \sum_{j=1}^{m_k} \delta (x_j^{(k)})$$
 where $\{x_j^{(k)}\}_{j=1,...,m_k}$ is a set of Fekete points of order $k$ for $K$; i.e., if $\{e_1,...,e_{m_k}\}$ is any basis for $\mathcal P_k(K)$, 
\begin{equation}\label{ratiovdm}\bigl |\det [e_i(x_j^{(k)})]_{i,j=1,...,m_k}\bigr|=\max_{q_1,...,q_{m_k}\in K}\bigl |\det[e_i(q_j)]_{i,j=1,...,m_k}\bigr|.\end{equation}
Define
 $$\nu:=c\sum_{k=1}^{\infty} \frac{1}{k^2}\nu_k$$
 where $c>0$ is chosen so that $\nu\in \mathcal M(K)$. 
 If $p\in \mathcal P_k(K)$, we have
 $$p(x)=\sum_{j=1}^{m_k} p(x_j^{(k)} )l_j^{(k)}(x)$$
 where, following the notation in (\ref{vdmnot}) with $N_k$ replaced by $m_k$ but using real variables,
 $$l_j^{(k)}(x)=\frac{VDM_k(x_1^{(k)},...,x_{j-1}^{(k)},x,x_{j+1}^{(k)},...,x_{m_k}^{(k)})}{\det [e_i(x_j^{(k)})]}\in \mathcal P_k(K)$$ so $l_j^{(k)}(x_j^{(k)})=\delta_{jk}$. Since $||l_j^{(k)}||_K=1$ from (\ref{ratiovdm}) we have 
 $$||p||_K \leq \sum_{j=1}^{m_k} |p(x_j^{(k)})|.$$
 On the other hand, 
 $$||p||_{L^2(d\nu)}\geq ||p||_{L^1(d\nu)}\geq  \frac{c}{k^2}\int_K |p|d\nu_k$$
 $$= \frac{c}{m_kk^2}\sum_{j=1}^{m_k} |p(x_j^{(k)})|.$$
 Thus we have
 $$||p||_K \leq {m_kk^2\over c} ||p||_{L^2(d\nu)}.$$
\end{proof}

Since the holomorphic polynomials in $\CC^n$ are a subset of the complex-valued polynomials in the underlying real coordinates in $\CC^n=\RR^{2n}$, we immediately obtain the following. 

\begin{corollary} \label{allbm} Let $K\subset \CC^n$ be a compact set. Then there exists a measure $\nu\in \mathcal M(K)$ such that $(K,\nu)$ satisfies a Bernstein-Markov property.
\end{corollary}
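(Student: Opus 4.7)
The plan is to deduce Corollary \ref{allbm} directly from Proposition \ref{preallbm} by identifying $\CC^n$ with $\RR^{2n}$. First I would write $z_j = x_j + iy_j$ for $j=1,\ldots,n$, so that a compact set $K\subset \CC^n$ becomes compact in $\RR^{2n}$ with real coordinates $(x_1,y_1,\ldots,x_n,y_n)$.

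The key observation is that any holomorphic polynomial $p\in \mathcal{P}_k$, when rewritten in these real coordinates, is a complex-valued polynomial of total real degree at most $k$. Indeed, each monomial $z_1^{\alpha_1}\cdots z_n^{\alpha_n}$ with $|\alpha|\leq k$ expands as $\prod_j (x_j+iy_j)^{\alpha_j}$, whose total real degree equals $|\alpha|\leq k$. Consequently $\mathcal{P}_k$ embeds isometrically (with respect to the sup-norm on $K$) into the space of complex-valued polynomials of real degree at most $k$ on $\RR^{2n}$, restricted to $K$.

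Thus, invoking Proposition \ref{preallbm} with the ambient Euclidean space $\RR^{2n}$ yields a measure $\nu\in \mathcal{M}(K)$ and constants $M_k$ with $\lim_{k\to\infty} M_k^{1/k}=1$ satisfying $||p||_K \leq M_k\,||p||_{L^2(\nu)}$ for every complex-valued polynomial $p$ of real degree at most $k$. Restricting this inequality to the subclass of holomorphic polynomials of degree at most $k$ is exactly the Bernstein-Markov property for $(K,\nu)$ as defined in Section \ref{sec:bmp}, completing the proof.

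There is no substantive obstacle here, since all the work is concealed in Proposition \ref{preallbm}; the only point worth noting — and it is immediate — is that the holomorphic degree of $p$ matches the total real degree of its expansion, so the exponent $k$ in the Bernstein-Markov constants coincides in the two formulations and hence the condition $\limsup_{k\to\infty} M_k^{1/k}=1$ transfers verbatim.
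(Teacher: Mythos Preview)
Your proof is correct and follows exactly the paper's own argument: identify $\CC^n$ with $\RR^{2n}$, note that holomorphic polynomials of degree at most $k$ are complex-valued polynomials of real degree at most $k$, and apply Proposition~\ref{preallbm}. The additional remark you make about the degrees matching is precisely the reason the inclusion of $\mathcal P_k$ into the real polynomial space of the same degree holds, so nothing is missing.
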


\begin{remark} \label{bmrem} Note that the measure $\nu$ constructed in Proposition \ref{preallbm} (and hence in Corollary \ref{allbm}) is discrete. In \cite{bl1} it was shown using a different procedure that any $L-$regular compact set admits a discrete measure satisfying a Bernstein-Markov property.
\end{remark}

\begin{definition} \label{strong} Given a compact set $K\subset \CC^n$, we say $\nu\in \mathcal M(K)$ satisfies a {\it strong Bernstein-Markov property} if $(K,\nu,Q)$ satisfies a weighted Bernstein-Markov property for all $Q\in  C(K)$.
\end{definition} 

 \begin{corollary} \label{strongbm} Let $K\subset \CC^n$ be a nonpluripolar compact set. Then there exists a measure $\nu\in \mathcal M(K)$ satisfying a strong Bernstein-Markov property.
 \end{corollary}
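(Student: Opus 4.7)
The plan is to adapt the Fekete-point construction of Proposition \ref{preallbm} to each weight in a countable dense subset of $C(K)$, combine the resulting discrete measures into a single probability measure $\nu$, and extend from that countable family to all continuous weights by a cheap $\epsilon$-perturbation. Nonpluripolarity of $K$ enters in exactly one place: to guarantee that for every $k$ the continuous function $|VDM_k^Q|$ attains a strictly positive maximum on $K^{N_k}$, since any nonzero polynomial of degree $\leq k$ has an algebraic, hence pluripolar, zero set and so cannot vanish identically on $K$.

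For fixed $Q\in C(K)$, I would pick weighted Fekete points $x_1^{(k,Q)},\ldots,x_{N_k}^{(k,Q)}\in K$ realizing the maximum of $|VDM_k^Q|$ and denote by $l_j^{(k,Q)}\in\mathcal P_k$ the Lagrange basis with $l_j^{(k,Q)}(x_i^{(k,Q)})=\delta_{ij}$. Comparing $|VDM_k^Q|$ at the Fekete array with its value after replacing $x_j^{(k,Q)}$ by an arbitrary $x\in K$ yields the weighted Fekete inequality
$$|e^{-kQ(x)}l_j^{(k,Q)}(x)| \leq e^{-kQ(x_j^{(k,Q)})} \qquad (x\in K);$$
then Lagrange interpolation $p = \sum_j p(x_j^{(k,Q)})\, l_j^{(k,Q)}$ followed by Cauchy-Schwarz yields
$$\|e^{-kQ}p\|_K \leq \sqrt{N_k}\,\Bigl(\sum_{j=1}^{N_k}|e^{-kQ(x_j^{(k,Q)})}p(x_j^{(k,Q)})|^2\Bigr)^{1/2}$$
for every $p\in\mathcal P_k$.

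Next, fix a countable dense $\{Q_n\}\subset C(K)$ in the sup-norm and define
$$\nu := c\sum_{n,k\geq 1}\frac{1}{n^2 k^2 N_k}\sum_{j=1}^{N_k}\delta_{x_j^{(k,Q_n)}} \in \mathcal M(K),$$
with $c$ chosen so $\nu(K)=1$ (the total mass before normalization is $(\sum_n n^{-2})(\sum_k k^{-2})<\infty$). For each fixed $n$, retaining only the $(n,k)$-block in $\|e^{-kQ_n}p\|_{L^2(\nu)}^2$ and combining with the previous display gives
$$\|e^{-kQ_n}p\|_K \leq \frac{nkN_k}{\sqrt c}\,\|e^{-kQ_n}p\|_{L^2(\nu)} \qquad (p\in\mathcal P_k),$$
whose constant has $k$-th root tending to $1$ as $k\to\infty$, so $(K,\nu,Q_n)$ has the weighted Bernstein-Markov property. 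For arbitrary $Q\in C(K)$ and $\epsilon>0$, pick $Q_n$ with $\|Q-Q_n\|_K<\epsilon$; since $|e^{-kQ}p|$ and $|e^{-kQ_n}p|$ differ pointwise on $K$ by a factor between $e^{-k\epsilon}$ and $e^{k\epsilon}$, one has
$$\|e^{-kQ}p\|_K \leq e^{k\epsilon}\|e^{-kQ_n}p\|_K \leq e^{2k\epsilon}\,\frac{nkN_k}{\sqrt c}\,\|e^{-kQ}p\|_{L^2(\nu)};$$
taking $k$-th roots, then $\limsup_{k\to\infty}$, and finally sending $\epsilon\downarrow 0$ gives the weighted Bernstein-Markov property of $(K,\nu,Q)$.

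The main obstacle is the weighted Fekete inequality, but it is a one-line determinantal comparison once positivity of $\max_{K^{N_k}}|VDM_k^Q|$ is secured; that positivity is the sole use of nonpluripolarity. Everything else---summing discrete measures with a Borel-type weight, Cauchy-Schwarz, and sup-norm approximation of $Q$---is routine, so no further serious technical obstacle is anticipated.
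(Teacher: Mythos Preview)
Your proof is correct, but it takes a genuinely different route from the paper's. The paper's argument is a two-line reduction: view $K\subset\CC^n=\RR^{2n}$, apply Corollary~\ref{allbm} to obtain an \emph{unweighted} Bernstein--Markov measure $\nu$ for the holomorphic polynomials on $\CC^{2n}$ (equivalently, complex-valued polynomials in the $2n$ real coordinates), and then invoke Theorem~3.2 of \cite{bloomweight} as a black box to conclude that $(K,\nu,Q)$ is weighted Bernstein--Markov for every $Q\in C(K)$. The passage from unweighted to weighted is thus outsourced entirely to an external result, whose proof ultimately rests on Stone--Weierstrass approximation of $e^{-Q}$ by real polynomials.

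You instead build the weighted property in directly: for each weight $Q_n$ in a countable dense set you take weighted Fekete arrays, pool them into a single discrete $\nu$, and get the weighted Bernstein--Markov inequality for each $Q_n$ from the weighted Fekete bound $|e^{-kQ_n(x)}l_j(x)|\le e^{-kQ_n(x_j)}$; a standard $e^{2k\epsilon}$ perturbation then extends to all $Q\in C(K)$. This is more self-contained---no appeal to \cite{bloomweight}---and makes the role of nonpluripolarity explicit (positivity of $\max_{K^{N_k}}|VDM_k|$, hence existence of the Lagrange basis). The paper's approach is shorter on the page but relies on nontrivial external machinery; yours is longer but elementary and transparent. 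Both produce a discrete strong Bernstein--Markov measure, consistent with Remark~\ref{bmrem}.
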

 
 \begin{proof} We consider $\CC^n=\RR^{2n}\subset \CC^{2n}$ and we use Corollary  \ref{allbm} to construct a measure $\nu$ on $K$ such that $(K,\nu)$ satisfies a Bernstein-Markov property with respect to holomorphic polynomials on $\CC^{2n}$. Theorem 3.2 of \cite{bloomweight} then shows that $(K,\nu,Q)$ satisfies a weighted Bernstein-Markov property for all $Q\in   C(K)$.
 \end{proof}

It is convenient to have a simple sufficient condition for a measure to satisfy the
strong Bernstein-Markov property. We say that $(K,\nu)$ satisfies a {\it mass-density property} if there exists $T>0$ so that
$$\nu(B(z_0,r))\geq r^T$$
for all $z_0\in K$ and all $r<r(z_0)$ where $r(z_0)>0$. In \cite{BLmass} it was shown that for $K$ $L-$regular, this property (indeed, a weaker mass-density property will suffice) implies that $(K,\nu)$ satisfies a Bernstein-Markov property. Thus if $K\subset \RR^n\subset \CC^n$ is $L-$regular and $(K,\nu)$ satisfies a mass-density property, then $(K,\nu)$ satisfies a strong Bernstein-Markov property. In particular, if $K=\bar D$ when $D$ is a bounded domain in $\RR^n$ with $C^1-$ boundary, any $\nu$ which is a positive, continuous multiple of Lebesgue measure on $D$ is a 
strong Bernstein-Markov measure for $K$.

\section{\bf Relation between $E^*$ and $J,J^Q, W,W^Q$ functionals.}
	\label{sec:eandp} 
We begin by sharpening Theorem 2.2 in \cite{BL} in two ways: 
\begin{enumerate}
\item the result applies to all  {\it nonpluripolar compact sets} $K\subset \CC^n$; 
\item the functionals defined using a ``$\limsup$'' and a ``$\liminf$'' coincide (see Definitions \ref{jwmu} and \ref{jwmuq}) -- and this is the essence of the large deviation principle in Theorem \ref{ldp}. 
\end{enumerate}

We remark that 
${\mathcal M}(K)$, with the weak-* topology, is a Polish space; i.e., a separable, complete metrizable space. A neighborhood basis 
of $\mu \in {\mathcal M}(K)$ is given by sets of the form 
\begin{equation}\label{nbhdbase}G(\mu, k, \epsilon) := \{\sigma  \in {\mathcal M}(K):
|\int_K (\Re z)^{\alpha}(\Im z)^{\beta} (d\mu - d\sigma )| < \epsilon\end{equation}
$$\hbox{for} \ 0\leq |\alpha|+|\beta| \leq k\} \ \hbox{where} \ \Re z =(\Re z_1,..., \Re z_n).$$

	Fix a nonpluripolar compact set $K$ and a strong Bernstein-Markov measure $\nu$. Given $G\subset {\mathcal M}(K)$, for each $s=1,2,...$ we set
\begin{equation}\label{nbhddef}\tilde G_s:= \{{\bf a} =(a_1,...,a_s)\in K^s: \frac{1}{s}\sum_{j=1}^s \delta_{a_j}\in G\}.\end{equation}
Then we define, for $k=1,2,...$, 
$$J_k(G):=[\int_{\tilde G_{N_k}}|VDM_k({\bf a})|^{2}d\nu ({\bf a})]^{1/2kN_k}$$
and
$$W_k(G):=\sup \{ |VDM_k({\bf a})|^{1/kN_k}: {\bf a} \in \tilde G_{N_k}\}.$$

\begin{definition} \label{jwmu} For $\mu \in \mathcal M(K)$ we define
$$\overline J(\mu):=\inf_{G \ni \mu} \overline J(G) \ \hbox{where} \ \overline J(G):=\limsup_{k\to \infty} J_k(G);$$
$$\underline J(\mu):=\inf_{G \ni \mu} \underline J(G) \ \hbox{where} \ \underline J(G):=\liminf_{k\to \infty} J_k(G);$$
and
$$\overline W(\mu):=\inf_{G \ni \mu} \overline W(G) \ \hbox{where} \ \overline W(G):=\limsup_{k\to \infty} W_k(G);$$
$$\underline W(\mu):=\inf_{G \ni \mu} \underline W(G) \ \hbox{where} \ \underline W(G):=\liminf_{k\to \infty} W_k(G).$$
\end{definition}

Here the infima are taken over all neighborhoods $G$ of the measure $\mu$ in ${\mathcal M}(K)$. 
Note that $\overline W,\underline W$ are independent of $\nu$ but, a priori, $\overline J,\underline J$ depend on $\nu$. These functionals are clearly nonnegative but can take the value zero. The weighted versions of these functionals are defined for $Q\in \mathcal A(K)$ starting with
\begin{equation}\label{jkqmu}J^Q_k(G):=[\int_{\tilde G_{N_k}}|VDM^Q_k({\bf a})|^{2}d\nu ({\bf a})]^{1/2kN_k}\end{equation}
and
$$W^Q_k(G):=\sup \{ |VDM^Q_k({\bf a})|^{1/kN_k}: {\bf a} \in \tilde G_{N_k}\}.$$

\begin{definition} \label{jwmuq} For $\mu \in \mathcal M(K)$ we define
$$\overline J^Q(\mu):=\inf_{G \ni \mu} \overline J^Q(G) \ \hbox{where} \ \overline J^Q(G):=\limsup_{k\to \infty} J^Q_k(G);$$
$$\underline J^Q(\mu):=\inf_{G \ni \mu} \underline J^Q(G) \ \hbox{where} \ \underline J^Q(G):=\liminf_{k\to \infty} J^Q_k(G);$$
and
$$\overline W^Q(\mu):=\inf_{G \ni \mu} \overline W^Q(G) \ \hbox{where} \ \overline W^Q(G):=\limsup_{k\to \infty} W^Q_k(G);$$
$$\underline W^Q(\mu):=\inf_{G \ni \mu} \underline W^Q(G) \ \hbox{where} \ \underline W^Q(G):=\liminf_{k\to \infty} W^Q_k(G).$$
\end{definition}

It is straightforward from the definitions (cf., \cite{BL}) that
\begin{enumerate}
\item $\overline J^Q(\mu)\leq \overline W^Q(\mu)\leq \bar \delta^Q (K)$ for $Q\in \mathcal A(K)$;
\item $\overline J(\mu)=\overline J^Q(\mu)\cdot e^{\int_K Qd\mu} \ \hbox{and} \  \overline W(\mu)=\overline W^Q(\mu)\cdot e^{\int_K Qd\mu}$ for $Q\in  C(K)$; hence
\item $\log \overline J(\mu)\leq \log \overline W(\mu)\leq \inf_{v\in  C(K)} [\log \bar \delta^{v} (K) +\int_K vd\mu]$; similarly
\item $\log \overline J^Q(\mu)\leq \log \overline W^Q(\mu)\leq \inf_{v\in  C(K)} [\log \bar \delta^{v} (K) +\int_K vd\mu]-\int_K Qd\mu$ for $Q\in   C(K)$.
\end{enumerate}

Properties (1)-(4) hold for the functionals $\underline J,\underline W,\underline J^Q,\underline W^Q$ using the same proofs as in section 3 of \cite{BL}. The uppersemicontinuity of all eight functionals on ${\mathcal M}(K)$ (with the weak-* topology) follows as well. We note the following.

 \begin{proposition} \label{cor44} The measure $\mu_{K,Q}$ is the unique maximizer of the functional $\mu \to \overline W^Q(\mu)$ and $\overline W^Q(\mu_{K,Q})=\bar \delta^Q(K)$.
 \end{proposition}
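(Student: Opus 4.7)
The plan is to combine property (1), which gives the uniform upper bound $\overline W^Q(\mu)\leq \bar \delta^Q(K)$ for every $\mu\in \mathcal M(K)$, with Theorem \ref{frombbn} on convergence of asymptotic weighted Fekete arrays. Attaining the value $\bar \delta^Q(K)$ at $\mu_{K,Q}$ will come from the definition of the transfinite diameter, and uniqueness will follow from the weak-* convergence delivered by Theorem \ref{frombbn}.

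First I would fix any array $\{x_j^{(k)}\}_{j=1,\ldots,N_k}\subset K$ with
$$|VDM_k^Q(x_1^{(k)},\ldots,x_{N_k}^{(k)})|^{1/kN_k}\to \bar \delta^Q(K);$$
by (\ref{wtdtd}) such arrays exist, honest $k$-th order weighted Fekete points being one example. Theorem \ref{frombbn} then says the empirical measures $\nu_k:=\frac{1}{N_k}\sum_{j=1}^{N_k}\delta_{x_j^{(k)}}$ converge weak-* to $\mu_{K,Q}$. Consequently, for any neighborhood $G$ of $\mu_{K,Q}$ in $\mathcal M(K)$ we have $(x_1^{(k)},\ldots,x_{N_k}^{(k)})\in \tilde G_{N_k}$ for all sufficiently large $k$, whence
$$W_k^Q(G)\geq |VDM_k^Q(x_1^{(k)},\ldots,x_{N_k}^{(k)})|^{1/kN_k}$$
for such $k$, so $\overline W^Q(G)\geq \bar \delta^Q(K)$. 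Property (1) gives the reverse inequality, so $\overline W^Q(G)=\bar \delta^Q(K)$ for every neighborhood $G\ni \mu_{K,Q}$, and hence $\overline W^Q(\mu_{K,Q})=\bar \delta^Q(K)$.

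For uniqueness I would argue by contradiction: suppose $\mu\neq \mu_{K,Q}$ satisfies $\overline W^Q(\mu)=\bar \delta^Q(K)$. Since $\mathcal M(K)$ is metrizable, I can choose an open neighborhood $G$ of $\mu$ with $\mu_{K,Q}\notin \overline G$. Then $\overline W^Q(G)=\bar \delta^Q(K)$ forces a subsequence $k_j\to\infty$ and arrays ${\bf a}^{(k_j)}\in \tilde G_{N_{k_j}}$ with $|VDM_{k_j}^Q({\bf a}^{(k_j)})|^{1/k_jN_{k_j}}\to \bar \delta^Q(K)$. Extending $\{{\bf a}^{(k_j)}\}$ to a full sequence by choosing honest weighted Fekete arrays for $k\notin \{k_j\}$, the resulting sequence satisfies the hypothesis of Theorem \ref{frombbn}, so all of its empirical measures converge weak-* to $\mu_{K,Q}$; in particular the subsequence $\nu_{k_j}$, which by construction lies in $G$, tends to $\mu_{K,Q}$, forcing $\mu_{K,Q}\in \overline G$, a contradiction. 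Hence $\overline W^Q(\mu)\leq \overline W^Q(G)<\bar \delta^Q(K)$, proving uniqueness.

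The only step I expect to require care is the subsequence argument: Theorem \ref{frombbn} is stated for a full sequence indexed by $k$, so I need to verify that filling in the missing indices with actual weighted Fekete points yields a bona fide asymptotic Fekete array. This is immediate from (\ref{wtdtd}), since for genuine weighted Fekete points $|VDM_k^Q|^{1/kN_k}=\delta^{Q,k}(K)^{n/(n+1)}\to \bar \delta^Q(K)$.
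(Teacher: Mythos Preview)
Your proof is correct and follows essentially the same approach as the paper: both arguments pivot on Theorem \ref{frombbn}, using it first to show that a genuine weighted Fekete array witnesses $\overline W^Q(\mu_{K,Q})=\bar\delta^Q(K)$ and then to force any maximizer to equal $\mu_{K,Q}$. The paper packages the uniqueness step via a reformulation of $\overline W^Q(\mu)$ as a supremum over arrays whose empirical measures converge to $\mu$, whereas your subsequence-plus-Fekete-fill-in argument is the explicit device that bridges the gap between the $\limsup$ in the definition and the full $\lim$ hypothesis of Theorem \ref{frombbn}; this is exactly the care the paper's terse proof leaves to the reader.
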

\begin{proof} It follows from its definition that 
$$\overline W^Q(\mu)=\sup\{\limsup_{k\to \infty} |VDM_{k}^{Q}({\bf a}^{(k)})|^{1/kN_k}\}\leq \bar \delta^Q(K)$$ 
where the supremum is taken over all arrays $\{{\bf a}^{(k)}\}_{k=1,2...}$ of $N_k-$tuples ${\bf a}^{(k)}$ whose normalized counting measures $\mu_k$ converges to $\mu$ weak*. If $\overline W^Q(\mu)=\bar \delta^Q(K)$ then there is an asymptotic weighted  
Fekete array $\{{\bf a}^{(k)}\}_{k=1,2...}$ as in the statement of Theorem \ref{frombbn} with $\mu_k\to  \mu$  weak*.  From this theorem, $\mu=\mu_{K,Q}$. 
\end{proof}

Theorems  \ref{wobsolete} and \ref{obsolete} show that the inequalities in (3) and (4) are equalities, and that the $\overline J,\overline W,\overline J^Q,\overline W^Q$ functionals coincide with their $\underline J,\underline W,\underline J^Q,\underline W^Q$ counterparts. 

 \begin{theorem} \label{wobsolete} Let $K\subset \CC^n$ be a nonpluripolar compact set and $Q\in  C(K)$. Then for any $\mu\in \mathcal M(K)$, 
 \begin{equation}\label{minwunwtd}\log \overline W(\mu)=\log \underline W(\mu)=\inf_{v\in  C(K)} [\log \bar \delta^{v} (K) +\int_K vd\mu] \ \hbox{and} \end{equation}
 \begin{equation}\label{minwwtd}\log \overline W^Q(\mu)=\log \underline W^Q(\mu)=\inf_{v\in  C(K)} [\log \bar \delta^{v} (K) +\int_K vd\mu]-\int_K Qd\mu.\end{equation}
  \end{theorem}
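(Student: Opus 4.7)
By property (2), which holds for $\underline W^Q$ by the identical argument used for $\overline W^Q$, both functionals scale by the factor $e^{-\int Q d\mu}$ under a change of weight; hence the weighted identity (\ref{minwwtd}) follows from the unweighted one (\ref{minwunwtd}), and it suffices to prove the latter. Properties (1) and (3), together with the trivial bound $\underline W(\mu) \leq \overline W(\mu)$, already give
$$\log \underline W(\mu) \leq \log \overline W(\mu) \leq \inf_{v\in C(K)}\Bigl[\log \bar\delta^v(K) + \int_K v\, d\mu\Bigr],$$
so the entire content of Theorem~\ref{wobsolete} is the matching lower bound
$$\log \underline W(\mu) \geq \inf_{v\in C(K)}\Bigl[\log \bar\delta^v(K) + \int_K v\, d\mu\Bigr]. \qquad (\star)$$

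I would first recast $(\star)$ via Legendre duality. Rumely's formula (\ref{keyrel}) gives $\log \bar\delta^v(K) = E(V_T) - E(V^*_{K,v})$, so combining with Definition~\ref{eleven} the right-hand side of $(\star)$ equals $E(V_T) - E^*(\mu) + \int_K u_0\, d\mu$. If $E^*(\mu) = +\infty$ there is nothing to show, so I assume $E^*(\mu) < +\infty$; Theorem~\ref{guedjzer} then supplies $u \in L(\CC^n)$ with $(dd^c u)^n = \mu$ and $\int_K u\, d\mu > -\infty$, a quantitative handle on $\mu$.

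The heart of the proof is constructive: given a weak-$*$ neighborhood $G$ of $\mu$ and $\epsilon > 0$, I must exhibit, for every sufficiently large $k$, an array $\mathbf{a}^{(k)} \in \tilde G_{N_k}$ with
$$|VDM_k(\mathbf{a}^{(k)})|^{1/kN_k} \geq \exp\Bigl(E(V_T) - E^*(\mu) + \int_K u_0\, d\mu\Bigr) - \epsilon.$$
My strategy is to fix $v_\epsilon \in C(K)$ nearly realizing the Legendre supremum in Definition~\ref{eleven} and to take weighted Fekete arrays for $v_\epsilon$. Theorem~\ref{frombbn} guarantees that their empirical measures converge weak-$*$ to $\mu_{K,v_\epsilon}$ while $|VDM_k^{v_\epsilon}|^{1/kN_k} \to \bar\delta^{v_\epsilon}(K)$, so $|VDM_k|^{1/kN_k} \to \bar\delta^{v_\epsilon}(K)\, e^{\int v_\epsilon d\mu_{K,v_\epsilon}}$, a quantity brought within $\epsilon$ of the target by the near-optimality of $v_\epsilon$.

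The principal obstacle is that the Fekete arrays converge to $\mu_{K,v_\epsilon}$ rather than to $\mu$, so a priori they need not lie in $\tilde G_{N_k}$. By Theorem~\ref{bbdiff} the Gateaux derivative of the concave functional $v \mapsto E(V^*_{K,v}) - \int v\, d\mu$ at $v_\epsilon$ equals $\mu_{K,v_\epsilon} - \mu$, so the real task is to select $v_\epsilon$ so that simultaneously the energy is nearly maximized and this gradient is weak-$*$ small. Since in infinite dimensions near-maximality of a concave functional does not automatically entail a small gradient, I would supply the extra input by approximating the potential $u$ furnished by Theorem~\ref{guedjzer} from above by $V^*_{K,v}$ with $v \in C(K)$ (an upper-envelope regularization), producing a sequence $v_\epsilon$ for which both $E(V^*_{K,v_\epsilon}) - \int v_\epsilon d\mu \to E^*(\mu) - \int u_0 d\mu$ and $\mu_{K,v_\epsilon} \to \mu$ weak-$*$. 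Combined with the upper semicontinuity of $\overline W$ on $\mathcal{M}(K)$ noted after Definition~\ref{jwmuq}, this closes the loop, establishes $(\star)$, and completes the proof.
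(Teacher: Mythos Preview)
Your strategy is exactly the paper's: reduce to the unweighted case, dispose of the upper bound via property~(3), handle $E^*(\mu)=+\infty$ trivially, and in the finite-energy case invoke Theorem~\ref{guedjzer} to get a potential $u$, approximate $u$ from above by continuous $Q_j$ so that $\mu_{K,Q_j}\to\mu$ weak-$*$, apply the Fekete result (Theorem~\ref{frombbn}) to obtain $\log\underline W(\mu_{K,Q_j})=\log\bar\delta^{Q_j}(K)+\int_K Q_j\,d\mu_{K,Q_j}$, and then pass to the limit via upper semicontinuity.

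There is, however, one genuine technical point you have glossed over. The Case~I identity produces $\int_K Q_j\,d\mu_{K,Q_j}$, whereas your energy convergence statement $E(V^*_{K,v_\epsilon})-\int v_\epsilon\,d\mu\to E^*(\mu)-\int u_0\,d\mu$ involves $\int_K Q_j\,d\mu$. Weak-$*$ convergence $\mu_{K,Q_j}\to\mu$ does \emph{not} by itself force $\int Q_j\,d\mu_{K,Q_j}-\int Q_j\,d\mu\to 0$, because the test function $Q_j$ is moving. What is actually needed is
\[
\lim_{j\to\infty}\int_K V^*_{K,Q_j}\,d\mu_{K,Q_j}=\lim_{j\to\infty}\int_K V^*_{K,Q_j}\,d\mu=\int_K u\,d\mu,
\]
which the paper isolates as (\ref{needit}). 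This is where the domination principle enters: one must show that $V^*_{K,Q_j}\downarrow u$ on all of $\CC^n$ (not merely $Q_j\downarrow u$ on $K$), and then combine monotone convergence with the fact that $V^*_{K,Q_j}=Q_j$ q.e.\ on $\mathrm{supp}\,\mu_{K,Q_j}$. Your ``upper-envelope regularization'' remark gestures at this but does not supply it.

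One minor slip: to bound $\underline W(\mu)$ from below you must invoke upper semicontinuity of $\underline W$, not of $\overline W$; usc of $\overline W$ alone would only recover the lower bound for $\overline W(\mu)$. Both functionals are indeed usc, so the fix is immediate.
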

  
   \begin{remark} \label{remen} Note that using (\ref{estar}) and (\ref{keyrel}) equation (\ref{minwunwtd}) says that $$-\log \overline W(\mu)=-\log \underline W(\mu)=E^*(\mu)-\int_Ku_0d\mu-E(V_T)$$ while (\ref{minwwtd}) says that 
   $$-\log \overline W^Q(\mu)=-\log \underline W^Q(\mu)=E^*(\mu)+\int_K (Q-u_0)d\mu-E(V_T).$$ Thus from (\ref{wtenmin}) we recover Proposition \ref{cor44}.

\end{remark}  

\begin{proof} It suffices to prove (\ref{minwunwtd}) as then (\ref{minwwtd}) follows from property (2). We have the upper bound 
\begin{equation}\label{upperbound2}\log \underline W(\mu) \leq \log \overline W(\mu)\leq \inf_{v\in  C(K)} [\log \bar \delta^{v} (K) +\int_K vd\mu]\end{equation}
from (3).

For the lower bound, we consider cases.

\smallskip \noindent {\sl Case I: $\mu=\mu_{K,v}$ for some $v\in  C(K)$.}
\smallskip

 Using Theorem \ref{frombbn}, if we consider arrays of points $\{x_1^{(k)},...,x_{N_k}^{(k)}\}_{k=1,2,...}$ in $K$ for which
$$\lim_{k\to \infty} \bigl(|VDM_k^{v}(x_1^{(k)},...,x_{N_k}^{(k)})|\bigr)^{\frac{1}{kN_k}} =\bar \delta^{v}(K),$$
we have $\frac{1}{N_k}\sum_{j=1}^{N_k} \delta_{x_j^{(k)}}\to \mu_{K,v}$ weak-*. Thus for any neighborhood $G$ of $\mu_{K,v}$ we have $\bar \delta^{v}(K)\leq  \underline W^{v}(\mu_{K,v},G)$; hence 
\begin{equation}\label{newstep}  \underline W^{v}(\mu_{K,v})=\overline W^{v}(\mu_{K,v}) = \bar \delta^{v}(K). \end{equation}
Applying (2) to (\ref{newstep}) we obtain (\ref{minwunwtd}) for $\mu=\mu_{K,v}$:
\begin{equation}\label{wnewstep}\log \underline W(\mu_{K,v})=\log \overline W(\mu_{K,v}) = \log \bar \delta^{v}(K) +\int_K v d\mu_{K,v}.\end{equation}

\smallskip \noindent {\sl Case II: $\mu\in \mathcal M(K)$ with the property that $E^*(\mu)<\infty$.}
\smallskip

From Theorem \ref{guedjzer} there exists $u\in L(\CC^n)$ with $\mu =(dd^cu)^n$ and $\int_K ud\mu>-\infty$ (see also Proposition 5.6 of \cite{BL}). However, since $u$ is only usc on $K$, $\mu$ is not necessarily of the form $\mu_{K,v}$ for some $v\in  C(K)$. Following the argument in Proposition 4.3 of \cite{BL}, taking a sequence of functions $\{Q_j\}\subset  C(K)$ with $Q_j \downarrow u$ on $K$, the weighted extremal functions $V^*_{K,Q_j}$ decrease to $u$ on $\CC^n$; 
$$\mu_j:=(dd^cV_{K,Q_j})^n\to \mu=(dd^cu)^n \ \hbox{weak-}*;$$
and
\begin{equation} \label{needit} \lim_{j\to \infty} \int_K V^*_{K,Q_j}d\mu_j =\lim_{j\to \infty} \int_K V^*_{K,Q_j}d\mu=\int_K u d\mu.\end{equation}
(Note that in this case, since $V_{K,Q_j}$ is not necessarily continuous, we get $\tilde u:=\lim_{j\to \infty} V^*_{K,Q_j}\geq u$ on $\CC^n$ and $\tilde u=u$ q.e. on $K$ which suffices for the application of the domination principle, Corollary A.2 of \cite{BL}). From the previous case (\ref{wnewstep}), we have 
$$\log \overline W(\mu_j)= \log  \underline W(\mu_j)=\log \bar \delta^{Q_j} (K) +\int_K Q_jd\mu_j.$$
Using uppersemicontinuity of the functional $\mu \to \underline W(\mu)$,
$$\limsup_{j\to \infty} \underline W(\mu_j)=\limsup_{j\to \infty} \overline W(\mu_j)\leq \underline  W(\mu).$$
Since $Q_j \downarrow u$ on $K$, 
\begin{equation}\label{upbound}\limsup_{j\to \infty} \log \bar \delta^{Q_j}(K)=\lim_{j\to \infty}  \log \bar \delta^{Q_j}(K)\end{equation}
exists. Since $V^*_{K,Q_j}=Q_j$ q.e. on supp$\mu_j$, from (\ref{needit}), 
$$\lim_{j\to \infty} \int_K Q_jd\mu_j= \int_K u d\mu.$$
Thus, together with (\ref{upbound}), we see that
$$\lim_{j\to \infty} \log \underline W(\mu_j)=\lim_{j\to \infty}\bigl( \log \bar \delta^{Q_j} (K) +\int_K Q_jd\mu_j \bigr):=M$$
exists and is less than or equal to $\log \underline W(\mu)$. 
We want to show that 
\begin{equation}\label{finbound}\inf_{v} [\log \bar \delta^{v} (K) +\int_K vd\mu]\leq M.\end{equation}

By monotone convergence, $\lim_{j\to \infty} \int_K Q_j d\mu = \int_K ud\mu$. 
Since we also have $\lim_{j\to \infty}\int_K Q_j d\mu_j = \int_K ud\mu$, given $\epsilon >0$, for $j\geq j_0(\epsilon)$,
$$\int_K Q_j d\mu_j \geq \int_K Q_j d\mu -\epsilon \ \hbox{and} \ \log \underline W(\mu_j)< M+\epsilon.$$
Hence for such $j$,
$$\inf_{\rm w} [\log \bar \delta^{v} (K) +\int_K vd\mu]\leq \log \bar \delta^{Q_j} (K) +\int_K Q_jd\mu$$
$$\leq \log \bar \delta^{Q_j} (K) +\int_K Q_jd\mu_j+\epsilon=\log \underline W(\mu_j)+\epsilon < M+2\epsilon,$$
yielding (\ref{finbound}). This finishes the proof in Case II. Note that from (\ref{estar}) we have
$$\inf_{v} [\log \bar \delta^{v} (K) +\int_K vd\mu]>-\infty$$
in this case.

\smallskip \noindent {\sl Case III: $\mu\in \mathcal M(K)$ with the property that $\inf_{v} [\log \bar \delta^{v} (K) +\int_K vd\mu]=-\infty$.}
\smallskip

As in \cite{BL}, we must show that $\overline W(\mu)=0$. This follows trivially from the upper bound (\ref{upperbound2}).
 \end{proof}

We next turn to the $\overline J,\underline J, \overline J^Q$ and $\underline J^Q$ functionals. The key step in the proof of Theorem \ref{obsolete} is to verify (\ref{newstep}) for $\overline J^v(\mu_{K,v})$ and $\underline J^v(\mu_{K,v})$.

 \begin{theorem} \label{obsolete} Let $K\subset \CC^n$ be a nonpluripolar compact set and let $\nu$ satisfy a strong Bernstein-Markov property. Fix $Q\in  C(K)$. Then for any $\mu\in \mathcal M(K)$, 
 \begin{align}\label{minunwtd}\log \overline J(\mu)=\log \overline W(\mu)= \log \underline  J(\mu)=\log \underline W(\mu)\end{align}
 $$=\inf_{v\in  C(K)} [\log \bar \delta^{v} (K) +\int_K v d\mu]$$
and
 \begin{equation}\label{minwtd}\log \overline J^Q(\mu)=\log \overline W^Q(\mu)= \log \underline J^Q(\mu)=\log \underline W^Q(\mu)\end{equation}
 $$=\inf_{v\in  C(K)} [\log \bar \delta^{v} (K) +\int_K v d\mu]-\int_K Qd\mu.$$   \end{theorem}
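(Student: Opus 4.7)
The plan is to mimic the proof of Theorem \ref{wobsolete}, with the essential new ingredient being Proposition \ref{weightedtd} (available because $\nu$ has the strong Bernstein-Markov property). It suffices to establish (\ref{minunwtd}), since (\ref{minwtd}) then follows by applying property (2) to both $\underline J$ and $\overline J$. Theorem \ref{wobsolete} already supplies the identity for $\overline W$ and $\underline W$, and $J_k(G) \leq W_k(G)$ trivially gives $\log \underline J \leq \log \underline W$ and $\log \overline J \leq \log \overline W$; combined with $\log \underline J \leq \log \overline J$ and property (3), the only outstanding step is the lower bound
$$\log \underline J(\mu) \geq \inf_{v \in C(K)}\bigl[\log \bar\delta^{v}(K) + \int_K v\, d\mu\bigr].$$
I will prove this by mimicking the three-case structure of Theorem \ref{wobsolete}.

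The key case is $\mu = \mu_{K,v}$ for some $v \in C(K)$, where I aim to show $\underline J^{v}(\mu_{K,v}) \geq \bar\delta^{v}(K)$; the unweighted version then follows from property (2). Fix a neighborhood $G$ of $\mu_{K,v}$. Proposition \ref{weightedtd} gives $Z_k^{1/(2kN_k)} \to \bar\delta^{v}(K)$, and Corollary \ref{largedev} gives $Prob_k(K^{N_k} \setminus A_{k,\eta}) \to 0$ exponentially for each $\eta > 0$. Hence it suffices to verify the inclusion $A_{k,\eta} \subset \tilde G_{N_k}$ for some $\eta = \eta(G) > 0$ and all $k$ large. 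With this in hand,
$$J_k^{v}(G)^{2kN_k} = \int_{\tilde G_{N_k}} |VDM_k^{v}|^2\, d\nu \geq Z_k \cdot Prob_k(A_{k,\eta}) \geq \tfrac{1}{2} Z_k$$
for large $k$, and taking $(2kN_k)$-th roots yields $\underline J^{v}(G) \geq \bar\delta^{v}(K)$. Passing to the infimum over $G$ completes this case. The inclusion itself is proved by contradiction using Theorem \ref{frombbn}: otherwise there would exist $\eta_k \downarrow 0$ and tuples ${\bf a}^{(k)} \in A_{k,\eta_k}$ whose normalized counting measures stay outside $G$; such an array is asymptotically weighted Fekete, so Theorem \ref{frombbn} forces its empirical measures to converge weak-$*$ to $\mu_{K,v} \in G$, a contradiction.

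For $\mu$ with $E^*(\mu) < \infty$, I apply Theorem \ref{guedjzer} to obtain $u \in L(\CC^n)$ with $(dd^c u)^n = \mu$, pick $Q_j \downarrow u$ in $C(K)$, and set $\mu_j := (dd^c V^*_{K,Q_j})^n$, which converges weak-$*$ to $\mu$. The previous case yields $\log \underline J(\mu_j) = \log \bar\delta^{Q_j}(K) + \int_K Q_j\, d\mu_j$, and uppersemicontinuity of $\underline J$ together with $\int_K Q_j\, d\mu_j \to \int_K u\, d\mu$ (exactly as derived in Case II of Theorem \ref{wobsolete}) delivers the required lower bound for $\underline J(\mu)$. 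In the remaining case where $\inf_v[\cdots] = -\infty$, Theorem \ref{wobsolete} gives $\overline W(\mu) = 0$, and hence $\underline J(\mu) \leq \overline J(\mu) \leq \overline W(\mu) = 0$, so both sides of (\ref{minunwtd}) equal $-\infty$.

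The main obstacle I anticipate is executing the inclusion $A_{k,\eta} \subset \tilde G_{N_k}$ cleanly for a single pair $(\eta, k_0)$ rather than merely along a subsequence. The compactness argument I sketched uses the uniqueness statement implicit in Theorem \ref{frombbn} that every asymptotic weighted Fekete array has empirical measures converging to $\mu_{K,v}$; it is standard but must be phrased so the failure of the inclusion produces a genuine asymptotic Fekete array, not just tuples whose VDM values happen to stay large. Once that point is settled the other ingredients (Bernstein-Markov in the form $Z_k^{1/(2kN_k)} \to \bar\delta^{v}(K)$, and the bookkeeping transferring Case II of Theorem \ref{wobsolete} from $\underline W$ to $\underline J$) are routine.
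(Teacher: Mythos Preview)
Your proposal is correct and follows essentially the same route as the paper: reduce to Case I, use Corollary \ref{largedev} and Proposition \ref{weightedtd} to get $\underline J^{v}(\mu_{K,v})\geq \bar\delta^{v}(K)$, then transport Cases II and III verbatim from Theorem \ref{wobsolete}. The one real difference is the handling of the inclusion $A_{k,\eta}\subset \tilde G_{N_k}$. You aim for a \emph{fixed} $\eta=\eta(G)>0$ valid for all large $k$; the paper instead sets $\eta=1/k$ and proves directly that $A_{k,1/k}\subset \tilde G_{N_k}$ for $k\geq k_0$, then checks $(1-\tfrac{1/k}{2\bar\delta^{v}(K)})^{2kN_k}\to 0$ (which holds since $N_k\to\infty$). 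This choice neatly dissolves the obstacle you flag at the end: the negation of the paper's claim immediately produces a subsequence $k_j\uparrow\infty$ with tuples in $A_{k_j,1/k_j}\setminus\tilde G_{N_{k_j}}$, and filling in the remaining $k$'s with genuine Fekete points yields an asymptotic weighted Fekete array to which Theorem \ref{frombbn} applies. Your fixed-$\eta$ version is also correct, but the passage from its negation to ``$\eta_k\downarrow 0$ and tuples ${\bf a}^{(k)}\in A_{k,\eta_k}$'' requires the diagonalization/subsequence step you anticipated; as written, that sentence elides it.
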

 \begin{proof} As in the previous proof, it suffices to prove (\ref{minunwtd}) since (\ref{minwtd}) follows from property (2). Again we have the upper bound 
 $$\log \overline J(\mu)\leq \log \overline W(\mu)\leq \inf_{v\in  C(K)} [\log \bar \delta^{v} (K) +\int_K vd\mu]$$
 from (3); for the lower bound, we need only consider Case I where $\mu=\mu_{K,v}$ for $v\in  C(K)$. We show the analogue of (\ref{wnewstep}) for $\overline J,\underline J$: 
\begin{equation}\label{jversion}\log \overline J(\mu_{K,v})=\log \underline J(\mu_{K,v})=\log \bar \delta^{v} (K) +\int_K vd\mu_{K,v}.\end{equation}
This shows, in particular, that 
$$\log \overline J(\mu_{K,v})=\log \overline W(\mu_{K,v})= \log \underline  J(\mu_{K,v}u)=\log \underline W(\mu_{K,v})$$
and hence we have
$$\log \overline J(\mu)=\log \overline W(\mu)= \log \underline  J(\mu)=\log \underline W(\mu)$$
for arbitrary $\mu \in  \mathcal M(K)$ following the proof of Theorem \ref{wobsolete}. This proves (\ref{minunwtd}). To prove (\ref{jversion}), we first verify the following.
 \medskip
 
\noindent {\sl Claim: Fix a neighborhood $G$ of $\mu_{K,v}$. We can find $k_0$ such that   \begin{equation}\label{setinclu}  A_{k,1/k} \subset \tilde G_{N_{k}} \ \hbox{for all } \ k\geq k_0
 \end{equation}
 where $A_{k,1/k}$ is defined in (\ref{aketa}) with $Q=v$ and $\eta =1/k$}.
 \medskip
  
 \noindent We prove (\ref{setinclu}) by contradiction: if false, then there is a sequence $\{k_j\}$ with $k_j\uparrow \infty$ such that for all $j$ sufficiently large we can find a point $x^j=(x_1^j,...,x_{N_{k_j}}^j)\in 
 A_{k_j,1/k_j} \setminus \tilde G_{N_{k_j}}$. But $\mu_j:=\frac{1}{N_{k_j}}\sum_{i=1}^{N_{k_j}}\delta_{x_i^j}\not\in \tilde G_{N_{k_j}}$ for $j$ sufficiently large contradicts Theorem \ref{frombbn} since $\mu_j\in A_{k_j,1/k_j} $ and $k_j \uparrow \infty$ imply $\mu_j\to \mu_{K,v}$ weak-*. This proves the claim.
 \medskip
 
 We observe that since $N_k={n+k\choose k}$
 $$(1-\frac{1/k}{2\bar \delta^{{v}}(K)})^{2k N_{k}}\to 0 \ \hbox{as} \ k\to \infty.$$
Using (\ref{setinclu}) and Corollary \ref{largedev},
$$\frac{1}{Z_{k}} \int_{\tilde G_{N_{k}}}  |VDM_{k}^{v}(z_1,...,z_{N_{k}
})|^2 \cdot d\nu(z_1) \cdots d\nu(z_{N_{k}
})$$
$$\geq \frac{1}{Z_{k}} \int_{A_{k,1/k} }  |VDM_{k}^{v}(z_1,...,z_{N_{k}
})|^2 \cdot d\nu(z_1) \cdots d\nu(z_{N_{k}
})$$
$$\geq 1- (1-\frac{1/k}{2\bar \delta^{{v}}(K)})^{2k N_{k}}\to 1 \ \hbox{as} \ k\to \infty.$$
Since $\nu$ satisfies a strong Bernstein-Markov property and $v\in C(K)$, using Proposition \ref{weightedtd} we conclude that
$$\liminf_{k\to \infty} \frac{1}{2k N_{k}}\log \int_{\tilde G_{N_{k}}}  |VDM_{k}^{v}(z_1,...,z_{N_{k}
})|^2 d\nu(z_1) \cdots d\nu(z_{N_{k}
})$$
$$\geq \log \bar \delta^{{v}}(K).$$

Taking the infimum over all neighborhoods $G$ of $\mu_{K,v}$ we obtain
$$\log \underline J^{v}(\mu_{K,v})\geq \log \bar \delta^{{v}}(K).$$
Thus we have the version of (\ref{newstep}) with $\overline J^{v}$ and $\underline J^{v}$:
\begin{equation}\label{jeqn}\log \underline J^{v}(\mu_{K,v})=\log \overline J^{v}(\mu_{K,v})= \log \bar \delta^{{v}}(K).\end{equation}
Using (2) with $\mu = \mu_{K,v}$ we obtain (\ref{jversion}).

 \end{proof}
 
\begin{remark} \label{fourfour} From now on, we simply use the notation $J,J^Q,W,W^Q$ without the overline or underline. \end{remark}

\section{\bf Large deviation.}\label{sec:ld} In this section we take $K\subset \CC^n$ a nonpluripolar compact set and a measure $\nu$ satisfying a strong Bernstein-Markov property. Fix $Q\in  C(K)$. For $x_1,...,x_{N_k}\in K$, we get a discrete probability measure $\kappa_k(x):=\frac{1}{N_k}\sum_{j=1}^{N_k} \delta_{x_j}$. Define 
$j_k:  K^{N_k} \to \mathcal M(K)$ via 
$$j_k(x_1,...,x_{N_k})=\kappa_k(x):=\frac{1}{N_k}\sum_{j=1}^{N_k} \delta_{x_j}.$$
From (\ref{probk}), 
$\sigma_k:=(j_k)_*(Prob_k) $ is a probability measure on $\mathcal M(K)$: for a Borel set $B\subset \mathcal M(K)$,
\begin{equation}\label{sigmak}\sigma_k(B)=\frac{1}{Z_k} \int_{\tilde B_{N_k}} |VDM_k^Q(x_1,...,x_{N_k
})|^2 d\nu(x_1) \cdots d\nu(x_{N_k
})\end{equation}
(recall (\ref{probk}) and (\ref{nbhddef}); here, $Z_k:=Z_k(K,Q,\nu)$). In addition, suppose we have a function $F:\RR\to \RR$ and a function $v\in C(K)$. We write, for $\mu \in \mathcal M(K)$, 
$$<v,\mu
>:= \int_K v d\mu
$$
and then 
\begin{equation}\label{lambdaint}\int_{\mathcal M(K)} F(<v,\mu
>)d\sigma_k(\mu
):=\end{equation}
$$\frac{1}{Z_k} \int_K \cdots \int_K  |VDM_k^Q(x_1,...,x_{N_k
})|^2  F\bigl(\frac{1}{N_k
}\sum_{j=1}^{N_k
} v(x_j)\bigr)d\nu(x_1) \cdots d\nu(x_{N_k
}).$$

\begin{theorem} \label{ldp} The sequence $\{\sigma_k=(j_k)_*(Prob_k)\}$ of probability measures on $\mathcal M(K)$ satisfies a {\bf large deviation principle} with speed $2kN_k$ and good rate function $\mathcal I:=\mathcal I_{K,Q}$ where, for $\mu \in \mathcal M(K)$,
\begin{equation}\label{ratefcnlform}\mathcal I(\mu):=\log J^Q(\mu_{K,Q})-\log J^Q(\mu)=\log W^Q(\mu_{K,Q})-\log W^Q(\mu).\end{equation}
 \end{theorem}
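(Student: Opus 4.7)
The plan is to apply the neighborhood-based characterization of the LDP (Theorem 4.1.11 of \cite{DZ}), with the crucial analytic input supplied by Theorem \ref{obsolete}. The key starting observation, obtained by unwinding (\ref{sigmak}) and the definition (\ref{jkqmu}) of $J_k^Q$, is that for every Borel $B\subset\mathcal M(K)$,
$$\sigma_k(B)=J_k^Q(B)^{2kN_k}\big/Z_k.$$
Applying $(2kN_k)^{-1}\log$ and using Proposition \ref{weightedtd} ($Z_k^{1/2kN_k}\to\bar\delta^Q(K)$) gives
$$\limsup_{k\to\infty}\frac{1}{2kN_k}\log\sigma_k(B)=\log\overline J^Q(B)-\log\bar\delta^Q(K),$$
$$\liminf_{k\to\infty}\frac{1}{2kN_k}\log\sigma_k(B)=\log\underline J^Q(B)-\log\bar\delta^Q(K).$$

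Letting $B$ run through a neighborhood basis of $\mu\in\mathcal M(K)$ and invoking Theorem \ref{obsolete} (which requires the strong Bernstein-Markov hypothesis on $\nu$ and absorbs all the real analytic effort), both quantities converge to $\log J^Q(\mu)-\log\bar\delta^Q(K)=-\mathcal I(\mu)$; here I use $\bar\delta^Q(K)=J^Q(\mu_{K,Q})$ from (\ref{jeqn}).

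From this pointwise identity the LDP bounds follow by routine arguments. For the lower bound on an open $O$: for each $\mu\in O$ and $\epsilon>0$ pick a neighborhood $G(\mu)\subset O$ with $\liminf_k(2kN_k)^{-1}\log\sigma_k(G(\mu))\geq-\mathcal I(\mu)-\epsilon$; since $\sigma_k(O)\geq\sigma_k(G(\mu))$ this yields $\liminf_k(2kN_k)^{-1}\log\sigma_k(O)\geq-\inf_{\mu\in O}\mathcal I(\mu)$ on taking the sup over $\mu$ and $\epsilon\downarrow 0$. For the upper bound on a closed $C$, weak-* compactness of $\mathcal M(K)$ (from compactness of $K$) yields a finite subcover $G(\mu_1),\dots,G(\mu_m)\supset C$ with $\limsup_k(2kN_k)^{-1}\log\sigma_k(G(\mu_i))\leq-\mathcal I(\mu_i)+\epsilon$, and the union bound $\sigma_k(C)\leq\sum_i\sigma_k(G(\mu_i))$ delivers $\limsup_k(2kN_k)^{-1}\log\sigma_k(C)\leq-\inf_{\mu\in C}\mathcal I(\mu)+\epsilon$.

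Finally, to verify $\mathcal I$ is a \emph{good} rate function, lower semicontinuity of $\mathcal I$ is immediate from upper semicontinuity of $J^Q$ (noted after Definition \ref{jwmuq}), and compactness of sub-level sets follows from weak-* compactness of $\mathcal M(K)$. The equivalence of the two expressions for $\mathcal I$ in (\ref{ratefcnlform}) is supplied by Theorem \ref{obsolete}, which gives $J^Q=W^Q$ on $\mathcal M(K)$; Proposition \ref{cor44} and (\ref{jeqn}) then give $J^Q(\mu_{K,Q})=W^Q(\mu_{K,Q})=\bar\delta^Q(K)$. The main obstacle has already been handled by Theorem \ref{obsolete}---matching the limsup and liminf as $G\downarrow\mu$---so the LDP itself reduces to the compactness-and-cover argument above; the only technical nuance is the convention $\log 0=-\infty$ for measures with $J^Q(\mu)=0$, which causes no difficulty.
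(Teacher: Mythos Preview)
Your proof is correct and follows essentially the same route as the paper's first proof: both reduce the LDP to the equality $\overline J^Q(\mu)=\underline J^Q(\mu)$ supplied by Theorem \ref{obsolete}, combined with $Z_k^{1/2kN_k}\to\bar\delta^Q(K)=J^Q(\mu_{K,Q})$ from Proposition \ref{weightedtd} and (\ref{jeqn}). The only cosmetic difference is that the paper packages the passage from the pointwise neighborhood identity to the LDP bounds by citing Proposition \ref{dzprop1} (Theorem 4.1.11 of \cite{DZ}) directly, whereas you spell out the finite-subcover argument for the upper bound and the monotonicity argument for the lower bound; these are precisely the contents of that proposition in the compact setting.
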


This means that $\mathcal I:\mathcal M(K)\to [0,\infty]$ is a lowersemicontinuous mapping such that the sublevel sets $\{\mu \in \mathcal M(K): \mathcal I(\mu)\leq \alpha\}$ are compact in the weak-* topology on $\mathcal M(K)$ for all $\alpha \geq 0$ ($\mathcal I$ is ``good'') satisfying (\ref{lowb}) and (\ref{highb}):

\begin{definition} \label{equivform}
The sequence $\{\mu_k\}$ of probability measures on $\mathcal M(K)$ satisfies a {\bf large deviation principle} (LDP) with good rate function $\mathcal I$ and speed $2kN_k$ if for all 
measurable sets $\Gamma\subset \mathcal M(K)$, 
\begin{equation}\label{lowb}-\inf_{\mu \in \Gamma^0}\mathcal I(\mu)\leq \liminf_{k\to \infty} \frac{1}{2kN_k} \log \mu_k(\Gamma) \ \hbox{and}\end{equation}
\begin{equation}\label{highb} \limsup_{k\to \infty} \frac{1}{2kN_k} \log \mu_k(\Gamma)\leq -\inf_{\mu \in \bar \Gamma}\mathcal I(\mu).\end{equation}
\end{definition}

In the setting of $\mathcal M(K)$, to prove a LDP it suffices to work with a base for the weak-* topology. The following is a special case of a basic general existence result for a LDP given in Theorem 4.1.11 in \cite{DZ}.

\begin{proposition} \label{dzprop1} Let $\{\sigma_{\epsilon}\}$ be a family of probability measures on $\mathcal M(K)$. Let $\mathcal B$ be a base for the topology of $\mathcal M(K)$. For $\mu\in \mathcal M(K)$ let
$$\mathcal I(\mu):=-\inf_{\{G \in \mathcal B: \mu \in G\}}\bigl(\liminf_{\epsilon \to 0} \epsilon \log \sigma_{\epsilon}(G)\bigr).$$
Suppose for all $\mu\in \mathcal M(K)$,
$$\mathcal I(\mu):=-\inf_{\{G \in \mathcal B: \mu \in G\}}\bigl(\limsup_{\epsilon \to 0} \epsilon \log \sigma_{\epsilon}(G)\bigr).$$
Then $\{\sigma_{\epsilon}\}$ satisfies a LDP with rate function $\mathcal I(\mu)$ and speed $1/\epsilon$. 
\end{proposition}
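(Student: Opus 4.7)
The plan is to verify the two LDP bounds (\ref{lowb}) and (\ref{highb}) directly from the definition of $\mathcal I$ and the hypothesis equating its limsup and liminf versions, and separately to verify that $\mathcal I$ is a good rate function. Since $K$ is compact, $\mathcal M(K)$ is compact and metrizable in the weak-* topology, so every closed subset is automatically compact. This simplification means goodness of $\mathcal I$ reduces to lowersemicontinuity, and the upper bound for closed sets can be proved by a finite covering argument without invoking exponential tightness.

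For lowersemicontinuity of $\mathcal I$, I would write $\mathcal I(\mu)=\sup_{G\in\mathcal B,\,\mu\in G}\bigl(-\liminf_\epsilon \epsilon\log\sigma_\epsilon(G)\bigr)$ as a supremum of constants indexed by sets containing $\mu$. If $\mu_n\to\mu$, then for every $G\in\mathcal B$ with $\mu\in G$ we have $\mu_n\in G$ eventually (since $G$ is open), hence $\mathcal I(\mu_n)\geq -\liminf_\epsilon \epsilon\log\sigma_\epsilon(G)$; taking $\liminf_n$ and then the supremum over $G$ yields $\liminf_n \mathcal I(\mu_n)\geq \mathcal I(\mu)$. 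Combined with compactness of $\mathcal M(K)$, this shows sublevel sets are compact, so $\mathcal I$ is good.

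For the lower bound (\ref{lowb}), let $\Gamma$ be measurable and let $\mu\in\Gamma^0$. Since $\mathcal B$ is a base, there exists $G\in\mathcal B$ with $\mu\in G\subset\Gamma^0$, and monotonicity of $\sigma_\epsilon$ gives
$$\liminf_{\epsilon\to 0}\epsilon\log\sigma_\epsilon(\Gamma)\;\geq\;\liminf_{\epsilon\to 0}\epsilon\log\sigma_\epsilon(G)\;\geq\;-\mathcal I(\mu)$$
by the defining formula for $\mathcal I(\mu)$. Taking the supremum over $\mu\in\Gamma^0$ yields (\ref{lowb}).

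The main obstacle, as usual for LDP upper bounds, is (\ref{highb}): it is here that the hypothesis equating the liminf and limsup definitions of $\mathcal I$ is actually used. Since $\sigma_\epsilon(\Gamma)\leq\sigma_\epsilon(\bar\Gamma)$, it suffices to treat closed sets $\Gamma$. Set $\alpha:=\inf_{\mu\in\bar\Gamma}\mathcal I(\mu)$ and fix any $\beta<\alpha$ (together with the trivial bound this handles $\alpha=\infty$). For each $\mu\in\bar\Gamma$, the hypothesis supplies $G_\mu\in\mathcal B$ with $\mu\in G_\mu$ and $\limsup_\epsilon \epsilon\log\sigma_\epsilon(G_\mu)<-\beta$. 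Compactness of $\bar\Gamma$ yields a finite subcover $G_{\mu_1},\dots,G_{\mu_N}$, so
$$\sigma_\epsilon(\bar\Gamma)\;\leq\;\sum_{i=1}^N \sigma_\epsilon(G_{\mu_i})\;\leq\; N\max_i\sigma_\epsilon(G_{\mu_i}).$$
The standard ``sum of $N$ terms'' computation $\limsup_\epsilon \epsilon\log(N a_\epsilon^{\max})=\limsup_\epsilon \epsilon\log a_\epsilon^{\max}$ (since $\epsilon\log N\to 0$) then gives $\limsup_\epsilon \epsilon\log\sigma_\epsilon(\bar\Gamma)\leq -\beta$. Letting $\beta\uparrow\alpha$ delivers (\ref{highb}). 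The delicate point, which I would highlight explicitly, is that the upper limit along the covering sets is used, so one genuinely needs the limsup-form of $\mathcal I$ at each $\mu$, not just the liminf-form — this is precisely the content of the coincidence hypothesis.
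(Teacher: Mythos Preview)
Your proof is correct. Note, however, that the paper does not actually supply a proof of this proposition: it is stated as ``a special case of a basic general existence result for a LDP given in Theorem~4.1.11 in \cite{DZ}'' and is simply cited. What you have written is essentially the standard argument behind that theorem, specialized to the present situation where $\mathcal M(K)$ is compact (so closed sets are automatically compact and exponential tightness is not needed). The lowersemicontinuity argument, the lower bound via a single basic set inside $\Gamma^0$, and the upper bound via a finite subcover of $\bar\Gamma$ together with the observation that the coincidence of the $\liminf$ and $\limsup$ definitions is precisely what is required to control $\limsup_\epsilon \epsilon\log\sigma_\epsilon(G_\mu)$ on each covering set, are all exactly as in Dembo--Zeitouni.
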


We give our first proof of Theorem \ref{ldp} using Theorem \ref{obsolete}. 

\begin{proof} As a base $\mathcal B$ for the topology of $\mathcal M(K)$, we can take the sets from (\ref{nbhdbase}) or simply all open sets. For $\{\sigma_{\epsilon}\}$, we take the sequence of probability measures $\{\sigma_k\}$ on $\mathcal M(K)$ and we take $\epsilon =\frac{1}{2kN_k}$. For $G\in \mathcal B$, 
$$\frac{1}{2kN_k}\log \sigma_k(G)= \log J_k^Q(G)-\frac{1}{2kN_k}\log Z_k$$
using (\ref{jkqmu}) and (\ref{sigmak}). From Proposition \ref{weightedtd}, and (\ref{jeqn}) with $v=Q$, 
$$\lim_{k\to \infty} \frac{1}{2kN_k}\log Z_k=\log \bar \delta^Q(K)= \log J^Q(\mu_{K,Q});$$ and by Theorem \ref{obsolete}, if $\inf_{v} [\log \bar \delta^{v} (K) +\int_K vd\mu]>-\infty$,
$$\inf_{G \ni \mu} \limsup_{k\to \infty} \log J_k^Q(G)=\inf_{G \ni \mu} \liminf_{k\to \infty} \log J_k^Q(G)=\log J^Q(\mu).$$
If, on the other hand, $\inf_{v} [\log \bar \delta^{v} (K) +\int_K vd\mu]=-\infty$, then 
$$\lim_{k\to \infty} \log J_k^Q(G)=-\infty.$$
Thus by Proposition \ref{dzprop1} and Theorem \ref{obsolete}, $\{\sigma_k\}$ satisfies an LDP with rate function 
$$\mathcal I(\mu):=\log J^Q(\mu_{K,Q})-\log J^Q(\mu)=\log W^Q(\mu_{K,Q})-\log W^Q(\mu)$$
and speed $2kN_k$. This rate function is good since $\mathcal M(K)$ is compact.
\end{proof}

Proposition \ref{cor44} shows that $\mu_{K,Q}$ maximizes the functional 
\begin{equation} \label{wtden} \mu \to \log W^Q(\mu) \end{equation}
over all $\mu\in {\mathcal M}(K)$. Thus
\begin{equation} \label{ldpmin}\mathcal I_{K,Q}(\mu)\geq 0 \ \hbox{with} \ \mathcal I_{K,Q}(\mu)= 0 \iff \mu=\mu_{K,Q}. \end{equation}
To summarize, $\mathcal I_{K,Q}$ is a good rate function with unique minimizer $\mu_{K,Q}$.

There is a converse to Proposition \ref{dzprop1}, Theorem 4.1.18 in \cite{DZ}. For $\mathcal M(K)$, it reads as follows:

\begin{proposition} \label{dzprop2} Let $\{\sigma_{\epsilon}\}$ be a family of probability measures on $\mathcal M(K)$. Suppose that $\{\sigma_{\epsilon}\}$ satisfies a LDP with rate function $\mathcal I(\mu)$ and speed $1/\epsilon$. Then for any base $\mathcal B$ for the topology of $\mathcal M(K)$  and any $\mu\in \mathcal M(K)$ 
$$\mathcal I(\mu):=-\inf_{\{G \in \mathcal B: \mu \in G\}}\bigl(\liminf_{\epsilon \to 0} \epsilon \log \sigma_{\epsilon}(G)\bigr)$$
$$=-\inf_{\{G \in \mathcal B: \mu \in G\}}\bigl(\limsup_{\epsilon \to 0} \epsilon \log \sigma_{\epsilon}(G)\bigr).$$ 
\end{proposition}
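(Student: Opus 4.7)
The plan is to sandwich $\mathcal{I}(\mu)$ between the two right-hand quantities using the open-set lower bound (\ref{lowb}), the closed-set upper bound (\ref{highb}), and the lower semicontinuity of the good rate function $\mathcal{I}$.

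First I would establish the direction
$$-\inf_{\{G\in\mathcal{B}:\mu\in G\}}\liminf_{\epsilon\to 0}\epsilon\log\sigma_\epsilon(G) \leq \mathcal{I}(\mu).$$
Any base element $G\ni\mu$ is open, so (\ref{lowb}) gives
$$\liminf_{\epsilon\to 0}\epsilon\log\sigma_\epsilon(G) \geq -\inf_{\nu\in G}\mathcal{I}(\nu) \geq -\mathcal{I}(\mu).$$
Taking the infimum over $G\in\mathcal{B}$ containing $\mu$, and combining with the pointwise inequality $\liminf\leq\limsup$, yields the above bound simultaneously for the $\liminf$ and the $\limsup$ versions.

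For the reverse direction I would exploit (\ref{highb}) together with the regularity of $\mathcal{M}(K)$. Fix $\alpha < \mathcal{I}(\mu)$. Since $\mathcal{I}$ is lsc (being a good rate function), the sublevel set $\{\nu:\mathcal{I}(\nu)\leq\alpha\}$ is closed and excludes $\mu$. Because $\mathcal{M}(K)$ is metrizable (Polish), there exists an open neighborhood $U$ of $\mu$ whose closure is disjoint from $\{\mathcal{I}\leq\alpha\}$; invoking the base property, pick $G\in\mathcal{B}$ with $\mu\in G\subset U$. Then $\bar G\subset\bar U$ and $\inf_{\nu\in\bar G}\mathcal{I}(\nu) > \alpha$, so applying the upper bound (\ref{highb}) to the closed set $\bar G$ gives
$$\limsup_{\epsilon\to 0}\epsilon\log\sigma_\epsilon(G) \leq \limsup_{\epsilon\to 0}\epsilon\log\sigma_\epsilon(\bar G) \leq -\inf_{\nu\in\bar G}\mathcal{I}(\nu) < -\alpha.$$
Letting $\alpha \uparrow \mathcal{I}(\mu)$ produces
$$-\inf_{\{G\in\mathcal{B}:\mu\in G\}}\limsup_{\epsilon\to 0}\epsilon\log\sigma_\epsilon(G) \geq \mathcal{I}(\mu),$$
which forces equality throughout and pins both the $\liminf$ and $\limsup$ versions at $\mathcal{I}(\mu)$.

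The only delicate step is the separation argument: one must upgrade an arbitrary open neighborhood of $\mu$ disjoint from the closed sublevel set $\{\mathcal{I}\leq\alpha\}$ to a neighborhood drawn from the prescribed base $\mathcal{B}$ whose closure still misses that set. This uses regularity of $\mathcal{M}(K)$, which is automatic since the space is Polish, and is the essential topological ingredient distinguishing this converse from the straightforward lower-bound half. Once this separation is secured, the rest is a direct unpacking of the defining LDP bounds.
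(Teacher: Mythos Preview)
Your argument is correct and is essentially the standard proof of this fact. Note, however, that the paper does not actually supply its own proof of Proposition~\ref{dzprop2}: it is simply quoted as Theorem~4.1.18 of \cite{DZ}, specialized to $\mathcal M(K)$. So there is no ``paper's proof'' to compare against beyond the reference. Your sandwich via (\ref{lowb}) for the easy direction and (\ref{highb}) plus lower semicontinuity of $\mathcal I$ for the harder direction is exactly the argument one finds in \cite{DZ}; the regularity/separation step you flag is indeed the only nontrivial point, and it goes through here because $\mathcal M(K)$ is metrizable (in fact compact).

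One small remark: your strict inequality $\inf_{\nu\in\bar G}\mathcal I(\nu)>\alpha$ is justified here because $\mathcal M(K)$ is compact and $\mathcal I$ is lsc, so the infimum over the closed (hence compact) set $\bar G$ is attained; in a general Polish space one would only get $\geq\alpha$, but that suffices since you let $\alpha\uparrow\mathcal I(\mu)$ anyway.
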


\begin{remark} \label{equival} This shows that, starting with a strong Bernstein-Markov measure $\nu$ and the corresponding sequence of probability measures $\{\sigma_k\}$ on $\mathcal M(K)$ in (\ref{sigmak}), the existence of 
an LDP with rate function $\mathcal I(\mu)$ and speed $2kN_k$ {\it implies} that
necessarily
\begin{equation} \label{ourrate} \mathcal I(\mu)=\log J^Q(\mu_{K,Q})-\log J^Q(\mu).\end{equation} 
We mention that uniqueness of the rate function is basic (cf., Lemma 4.1.4 of \cite{DZ}). The main ingredient utilized here is Theorem \ref{frombbn} on the distribution of asymptotic weighted Fekete points. This in turn hinges on the differentiability result Theorem \ref{bbdiff}. On the other hand, in potential-theoretic situations where the analogue of Theorem \ref{frombbn} is easily obtained, this approach to a LDP is fairly straightforward. We illustrate this in the univariate case in Section \ref{sec:fr}. 

\end{remark}

We turn to the second proof of Theorem \ref{ldp}. This follows from Corollary 4.6.14 in \cite{DZ}, which is a general version of the G\"artner-Ellis theorem. This approach was brought to our attention by S. Boucksom. Again, we state the version of the \cite{DZ} result for $\mathcal M(K)$.

\begin{proposition} \label{gartell}
Let $\{\sigma_{\epsilon}\}$ be a family of Borel probability measures on $C(K)^*$ equipped with the weak-* topology. Suppose for each $\lambda \in C(K)$, the limit
$$\Lambda(\lambda):=\lim_{\epsilon \to 0} \epsilon \log \int_{C(K)^*} e^{\lambda(x)/\epsilon}d\sigma_{\epsilon}(x)$$
exists as a finite real number and assume $\Lambda$ is Gateaux differentiable; i.e., for each $\lambda, \theta\in C(K)$, the function $f(t):= \Lambda(\lambda +t\theta)$ is differentiable at $t=0$. Then $\{\sigma_{\epsilon}\}$ satisfies an LDP in $C(K)^*$ with the convex, good rate function $\Lambda^*$.
\end{proposition}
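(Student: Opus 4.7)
The plan is to establish this statement as a specialization of the infinite-dimensional Gärtner--Ellis theorem to the dual space $C(K)^*$ with weak-* topology, following the scheme of Corollary 4.6.14 in \cite{DZ}. First note that $\Lambda$ is automatically convex: Hölder's inequality applied to $\int e^{\lambda(x)/\epsilon}d\sigma_\epsilon(x)$ and passage to the limit yield $\Lambda(t\lambda+(1-t)\mu)\leq t\Lambda(\lambda)+(1-t)\Lambda(\mu)$. The Legendre transform $\Lambda^*(x):=\sup_{\lambda\in C(K)}[\lambda(x)-\Lambda(\lambda)]$ is therefore convex and weak-* lower semicontinuous as a pointwise supremum of weak-* continuous affine functionals of $x$. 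Goodness of $\Lambda^*$ follows from finiteness of $\Lambda$: if $\Lambda^*(x)\leq\alpha$ then $|\lambda(x)|\leq \alpha+\max\{\Lambda(\lambda),\Lambda(-\lambda)\}$ for every $\lambda\in C(K)$, so the uniform boundedness principle bounds $\|x\|$ in $C(K)^*$, and Banach--Alaoglu gives weak-* compactness of the sublevel sets.

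For the upper bound, fix a weak-* compact $F\subset C(K)^*$. For any $\lambda\in C(K)$ the exponential Chebyshev inequality gives
$$\sigma_\epsilon(F)\leq e^{-\inf_{x\in F}\lambda(x)/\epsilon}\int e^{\lambda(y)/\epsilon}d\sigma_\epsilon(y),$$
so $\limsup_{\epsilon\to 0} \epsilon\log \sigma_\epsilon(F)\leq \Lambda(\lambda)-\inf_{x\in F}\lambda(x)$. A standard minimax argument (cover $F$ by finitely many weak-* neighborhoods on which judiciously chosen affine functionals $\lambda_i$ approximate $\Lambda^*$ from below, then combine the individual bounds) exchanges the two infima to yield $\limsup \epsilon\log \sigma_\epsilon(F)\leq -\inf_{x\in F}\Lambda^*(x)$. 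Extending from compact to closed $F$ requires exponential tightness; this is supplied by finiteness of $\Lambda(c\mathbf{1})$ for every constant $c$, which produces exponential moment bounds forcing the $\sigma_\epsilon$ to concentrate on weak-* compact balls with exponentially small error.

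The main obstacle is the lower bound: given open $G\subset C(K)^*$ and $x_0\in G$ with $\Lambda^*(x_0)<\infty$, one must show $\liminf \epsilon\log \sigma_\epsilon(G)\geq -\Lambda^*(x_0)$. The Gateaux differentiability hypothesis is precisely what enables a change-of-measure (tilting) argument. By convex duality, Gateaux differentiability of $\Lambda$ at some $\lambda_0$ with derivative $x_0$ (meaning $\theta(x_0)=\tfrac{d}{dt}\Lambda(\lambda_0+t\theta)|_{t=0}$ for every $\theta\in C(K)$) yields $\Lambda^*(x_0)=\lambda_0(x_0)-\Lambda(\lambda_0)$ and identifies $\lambda_0$ as the dual optimizer. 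Introduce the tilted measures
$$d\tilde\sigma_\epsilon(y):=\frac{e^{\lambda_0(y)/\epsilon}}{Z_\epsilon(\lambda_0)}d\sigma_\epsilon(y),\qquad Z_\epsilon(\lambda_0):=\int e^{\lambda_0(y)/\epsilon}d\sigma_\epsilon(y).$$
The differentiability forces the barycenter of $\tilde\sigma_\epsilon$ to converge weak-* to $x_0$, and a second tilt (or a concentration argument) gives $\tilde\sigma_\epsilon(G)\to 1$. Unwinding the tilt yields
$$\sigma_\epsilon(G)\geq e^{-\sup_{y\in G}\lambda_0(y)/\epsilon}\,Z_\epsilon(\lambda_0)\,\tilde\sigma_\epsilon(G);$$
taking $\epsilon\log$, $\liminf$, and then shrinking $G$ through a weak-* neighborhood base of $x_0$ (so that $\sup_{G}\lambda_0\to \lambda_0(x_0)$) delivers the bound $-\Lambda^*(x_0)$.

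The hardest step is rigorously obtaining $\tilde\sigma_\epsilon(G)\to 1$, because an arbitrary $x_0$ with $\Lambda^*(x_0)<\infty$ need not lie in the range of the Gateaux derivative $D\Lambda$. The standard remedy is to first handle \emph{exposed} points of $\Lambda^*$---those of the form $x_0=D\Lambda(\lambda_0)$ for some $\lambda_0$---by the tilting scheme above, and then extend to general $x_0$ by approximation, invoking lower semicontinuity of $\Lambda^*$ and convexity to pass to the limit. Since $\Lambda$ is finite and Gateaux differentiable on all of $C(K)$, the exposed points are suitably dense in $\{\Lambda^*<\infty\}$, and the limit argument closes, completing the lower bound and hence the LDP with good rate function $\Lambda^*$.
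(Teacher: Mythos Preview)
The paper does not prove this proposition at all; it is quoted as a specialization of Corollary 4.6.14 in \cite{DZ} and invoked as a black box in the second proof of Theorem \ref{ldp}. So there is no ``paper's own proof'' to compare against---you have supplied a proof sketch where the authors simply cite the literature.

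That said, your outline is the standard Dembo--Zeitouni scheme (Chebyshev/minimax for the upper bound, tilting at exposed points for the lower bound), and the overall shape is right. Two of your auxiliary steps, however, would not go through as stated. Your exponential tightness argument appeals only to finiteness of $\Lambda(c\mathbf{1})$ for constants $c$; but $\langle c\mathbf{1},x\rangle = c\,x(K)$ controls the net mass $x(K)$, not the total variation $\|x\|$, so this does not by itself confine $\sigma_\epsilon$ to weak-* compact balls in $C(K)^*$. Likewise, your goodness argument (``the uniform boundedness principle bounds $\|x\|$'') needs $\Lambda$ to be bounded on the unit ball of $C(K)$, whereas you only have finiteness at each $\lambda$; pointwise finiteness of a convex function on an infinite-dimensional space does not force local boundedness. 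In the paper's actual application both issues disappear: each $\sigma_k$ is supported in the weak-* compact set $\mathcal M(K)\subset C(K)^*$, so exponential tightness is trivial, and the authors explicitly restrict the rate function to $\mathcal M(K)$ via Lemma 4.1.5(b) of \cite{DZ}, noting that goodness then follows ``since $\mathcal M(K)$ is compact.''
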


Here $$\Lambda^*(x):= \sup_{\lambda \in C(K)}\bigl(<\lambda,x>- \Lambda(\lambda)\bigr),$$ is the Legendre transform of $\Lambda$. The upper bound (\ref{highb}) in the LDP holds with rate function $\Lambda^*$ under the assumption that the limit $\Lambda(\lambda)$ exists and is finite; the Gateaux differentiability of $\Lambda$ is needed for the lower bound (\ref{lowb}). 
We proceed with the second proof of Theorem \ref{ldp}.

\begin{proof} We show that for each $v\in C(K)$,
$$\lim_{k \to \infty} \frac{1}{2kN_k
}\log  \int_{C(K)^*} e^{2kN_k
<v,\mu>}d\sigma_{k}(\mu)$$ exists as a finite real number. First, since $\sigma_k$ is in ${\mathcal M}(K)$, the integral can be taken over ${\mathcal M}(K)$. Consider
$$\frac{1}{2kN_k
}\log  \int_{{\mathcal M}(K)} e^{2kN_k
<v,\mu>}d\sigma_{k}(\mu).$$
By (\ref{lambdaint}), this is equal to
$$\frac{1}{2kN_k
}\log  \frac{1}{Z_k}\cdot \int_{K^{N_k
}}  |VDM_k^{Q-v}(x_1,...,x_{N_k
})|^2 d\nu(x_1) \cdots d\nu(x_{N_k
}).$$
Recall that
$$Z_k= \int_{K^{N_k
}} |VDM_k^Q(x_1,...,x_{N_k
}))|^2d\nu(x_1) \cdots d\nu(x_{N_k
}).$$
Define
$$\tilde Z_k:= \int_{K^{N_k
}}  |VDM_k^{Q-v}(x_1,...,x_{N_k
})|^2 d\nu(x_1) \cdots d\nu(x_{N_k
}).$$
Then we have
$$\lim_{k\to \infty} \tilde Z_k^{\frac{1}{2kN_k
}}= \bar \delta^{Q-v} (K) \ \hbox{and} \ \lim_{k\to \infty}  Z_k^{\frac{1}{2kN_k
}}= \bar \delta^Q (K) $$
from (\ref{zeen}) in Proposition \ref{weightedtd} and the assumption that $(K,\nu,\tilde Q)$ satisfies the weighted Bernstein-Markov property for {\it all} $\tilde Q\in  C(K)$. Thus
\begin{equation}\label{lambda}\Lambda(v)= \lim_{k \to \infty} \frac{1}{2kN_k
}\log \frac{\tilde Z_k}{Z_k}=  \log \frac{\bar \delta^{Q-v} (K)}{ \bar \delta^{Q} (K)} .\end{equation}
Define now, for $v, v'\in C(K)$, 
$$f(t):=  E(V_{K,Q-(v+tv')}).$$
Theorem \ref{bbdiff} and (\ref{keyrel}) give that $\Lambda$ is Gateaux
 differentiable and Proposition \ref{gartell} gives that $\Lambda^*$ is a rate function on $C(K)^*$.
 
 Since each $\sigma_k$ has support in $\mathcal M(K)$, it follows from (\ref{lowb}) and (\ref{highb}) in Definition \ref{equivform} of an LDP with $\Gamma \subset C(K)^*$ that for $\mu\in C(K)^*\setminus \mathcal M(K)$, $\Lambda^*(\mu)=+\infty$. By Lemma 4.1.5 (b) of \cite{DZ}, the restriction of $\Lambda^*$ to $\mathcal M(K)$ is a rate function. Since $\mathcal M(K)$ is compact, it is a good rate function. Being a Legendre transform, $\Lambda^*$ is convex.  

To compute $\Lambda^*$ and demonstrate (\ref{ratefcnlform}), we have, using (\ref{lambda}) and (\ref{keyrel}),
$$\Lambda^*(\mu)= \sup_{v\in C(K)} \bigl( \int_K v d\mu - E(V_{K,Q})+E(V_{K,Q-v})\bigr).$$
Thus $$\Lambda^*(\mu)+E(V_{K,Q})= \sup_{v\in C(K)} \bigl( \int_K v d\mu +E(V_{K,Q-v})\bigr)$$
 $$= \sup_{u\in C(K)} \bigl( E(V_{K,Q+u})-\int_K u d\mu \bigr) \ (\hbox{taking} \ u=-v).$$
Rearranging and replacing $u$ in the supremum by $v=u+Q$,
$$\Lambda^*(\mu)=  \sup_{u\in C(K)} \bigl( E(V_{K,Q+u})-\int_K u d\mu \bigr)-E(V_{K,Q})$$
\begin{equation} \label{ratefcnl} = \sup_{v\in C(K)} \bigl( E(V_{K,v})-\int_K v d\mu \bigr)-\bigr[E(V_{K,Q})-\int_K Q d\mu\bigr].\end{equation}
Using (\ref{keyrel}) and Theorem \ref{obsolete}, we obtain (\ref{ratefcnlform}).

\end{proof}

\begin{remark} \label{equival2} From the definition of $E^*$ in Definition \ref{eleven}, equation (\ref{ratefcnl}) gives, after some manipulation, 
\begin{equation}\label{bermanrate}\Lambda^*(\mu)=E^*(\mu)+\int_K (Q-u_0)d\mu -E(V_{K,Q}).\end{equation}
Recall from (\ref{wtenmin}) that $\mu_{K,Q}$ minimizes the functional
$$\mu \to E^*(\mu)+\int_K (Q-u_0)d\mu$$ and the minimal value is $E(V_{K,Q})$. Thus $\Lambda^*(\mu)\geq 0$ and $\Lambda^*(\mu)=0$ precisely for $\mu=\mu_{K,Q}$. In this way one obtains the LDP with rate function arising from the functional $E^*$ in a very natural manner as this functional is itself a Legendre-type transform (see \cite{B}). An advantage of using Proposition \ref{gartell} is that one automatically gets {\it convexity} of the rate functional. Here, the asymptotic weighted Fekete result Theorem \ref{frombbn} is not needed, but one clearly uses the differentiability result Theorem \ref{bbdiff}. Finally, to deduce the equalities
$$-\log \overline J(\mu)= -\log \underline J(\mu)=E^*(\mu)-\int_Ku_0d\mu-E(V_T)$$ and 
   $$-\log \overline J^Q(\mu)= -\log \underline J^Q(\mu)=E^*(\mu)+\int_K (Q-u_0)d\mu-E(V_T),$$
one can appeal to the Rumely-type formula (\ref{keyrel}). Alternately, starting with a strong Bernstein-Markov measure $\nu$ and the corresponding sequence of probability measures $\{\sigma_k\}$ on $\mathcal M(K)$ in (\ref{sigmak}), the G\"artner-Ellis approach gives the LDP with rate function $\Lambda^*$ in (\ref{bermanrate}) using Proposition \ref{weightedtd} and the differentiability result Theorem \ref{bbdiff}. Then the {\it uniqueness} of the rate function gives, as mentioned in Remark \ref{equival}, that this rate function $\Lambda^*$ is given by $\mathcal I$ in (\ref{ourrate}). Thus one obtains the equalities above relating $E^*,J$ and $J^Q$.  
\end{remark}

The differentiability result Theorem \ref{bbdiff} is crucial in either proof of the LDP. In the Angelesco ensemble case studied in \cite{blang} as well as more general univariate vector energy settings, a natural formulation of an analogue of this result is unclear. However, in these settings, the analogue of the asymptotic weighted Fekete result Theorem \ref{frombbn} is straightforward. We illustrate the ease in obtaining an LDP in the next section in the classical weighted potential-theoretic setting in one variable.

\section{\bf The univariate situation.}\label{sec:fr}
Let $K\subset \CC$ be compact and non-polar. For $\mu\in \mathcal M (K)$, we write 
$$p_{\mu}(z):=\int_K \log \frac{1}{|z-\zeta|}d\mu(\zeta),$$
the logarithmic potential function of $\mu$; 
$$I(\mu):=\int_K \int_K \log \frac{1}{|z-\zeta|}d\mu(\zeta)d\mu(z),$$
the logarithmic energy of $\mu$; and, if $Q\in \mathcal A(K)$, 
$$I^Q(\mu):= I(\mu)+2\int_K Qd\mu$$
is the weighted logarithmic energy of $\mu$ with respect to the weight $Q$. The measure $\mu_{K,Q}=dd^cV^*_{K,Q}$ satisfies $\inf_{\nu \in \mathcal M(K)} I^Q(\nu)=I^Q(\mu_{K,Q})$. For compact sets and admissible weights in $\CC$, (\ref{wtdtd}) and Theorem \ref{frombbn} are standard facts, requiring no more effort than the classical unweighted case (cf., \cite{ST}). The version of Corollary \ref{largedev} proved in \cite{BLtd} follows immediately from the univariate version of Proposition \ref{weightedtd}. Theorem \ref{frombbn} immediately yields Proposition \ref{cor44} in this setting. In addition, the version of the domination principle used in the proof of Theorem \ref{wobsolete} in the univariate situation is Theorem 3.2 in Chapter II of \cite{ST}. 

Thus, a priori, the only nonstandard ingredient required to prove Theorems \ref{wobsolete}, \ref{obsolete} and \ref{ldp} in $\CC$ is (\ref{needit}). We show this result also follows from standard potential-theoretic arguments. We begin with $\mu\in \mathcal M (K)$ having $I(\mu)< +\infty$ and we let $u(z):=-p_{\mu}(z)$. As in the proof of Theorem \ref{wobsolete}, the problem is that $u$ is only usc and thus $\mu$ is not necessarily of the form $\mu_{K,Q}$ for $Q\in \mathcal A(K)$. Proceeding as in Theorem \ref{wobsolete}, we take a sequence of continuous admissible weights $\{Q_j\}$ with $Q_j \downarrow u$ on $K$. By the domination principle, we get   \begin{equation}\label{dummy1}u_j:=V^*_{K,Q_j} \downarrow u =-p_{\mu} \  \hbox{on}  \ \CC. \end{equation} We have $\mu_j :=\mu_{K,Q_j}\to \mu$ weak-*. Note that $dd^cu_j=-dd^c p_{\mu_j}=\mu_j$ implies
  \begin{equation}\label{dummy}p_{\mu_j}+u_j=F_j \ \hbox{on} \ \CC \end{equation}
 for some constant $F_j$. Finally, for $z\not \in K$, the function $\zeta \to \log |z-\zeta|$ is continuous on $K$ and thus
  $\mu_j \to \mu$ weak-* implies that
  \begin{equation}\label{last4}\lim_{j\to \infty} p_{\mu_j}(z)=p_{\mu}(z) \ \hbox{for} \ z\in \CC\setminus K.\end{equation}
 
 \begin{proposition}\label{laststep} With the preceding notation, 
 $$\lim_{j\to \infty} F_j =0; \ \lim_{j\to \infty} I(\mu_j) = I(\mu); \ \hbox{and} \
\lim_{j\to \infty} \int_K u_j d\mu_j = \int_K ud\mu. $$
\end{proposition}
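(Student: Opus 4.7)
First, $F_j \to 0$ is immediate: fix any $z_0 \in \CC \setminus K$. Evaluating (\ref{dummy}) at $z_0$ gives $F_j = p_{\mu_j}(z_0) + u_j(z_0)$; by (\ref{last4}), $p_{\mu_j}(z_0) \to p_\mu(z_0)$, and by (\ref{dummy1}), $u_j(z_0) \to u(z_0) = -p_\mu(z_0)$. Adding these two limits gives $F_j \to 0$.

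For $I(\mu_j) \to I(\mu)$, the plan is to prove the stronger statement that $\mu_j \to \mu$ in the energy topology, i.e., $I(\mu_j - \mu) \to 0$. From (\ref{dummy}) and $u = -p_\mu$, one has the pointwise identity on $\CC$
\[
p_{\mu_j} - p_\mu = F_j - (u_j - u).
\]
Since $\mu_j - \mu$ has total mass zero, integrating this identity against $\mu_j - \mu$ gives
\[
I(\mu_j - \mu) = \int_K (u_j - u)\,d\mu - \int_K (u_j - u)\,d\mu_j.
\]
Because $u_j - u \geq 0$ decreases pointwise to $0$ on $K$, and $u_1 - u$ is $\mu$-integrable (since $u_1 \leq Q_1$ q.e.\ on $K$ with $Q_1$ continuous, while $\int u\,d\mu = -I(\mu) > -\infty$), monotone convergence gives $\int_K (u_j - u)\,d\mu \to 0$. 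Both $I(\mu_j), I(\mu) < \infty$, so the classical positive-definiteness of logarithmic energy on zero-mass signed measures (Fuglede) yields $I(\mu_j - \mu) \geq 0$; combined with $\int(u_j-u)\,d\mu_j \geq 0$, this sandwiches
\[
0 \leq \int_K (u_j - u)\,d\mu_j \leq \int_K (u_j - u)\,d\mu \to 0,
\]
forcing $I(\mu_j - \mu) \to 0$.

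To finish, I would compute the mutual energy via Fubini and (\ref{dummy}):
\[
I(\mu_j, \mu) = \int_K p_{\mu_j}\,d\mu = F_j - \int_K u_j\,d\mu.
\]
The same monotone convergence argument gives $\int u_j\,d\mu \to \int u\,d\mu = -I(\mu)$, hence $I(\mu_j, \mu) \to I(\mu)$. Expanding $I(\mu_j - \mu) = I(\mu_j) - 2 I(\mu_j, \mu) + I(\mu)$ and substituting the two limits just obtained forces $I(\mu_j) \to I(\mu)$. Finally, integrating (\ref{dummy}) against $\mu_j$ yields the identity $I(\mu_j) + \int u_j\,d\mu_j = F_j$, so $\int u_j\,d\mu_j = F_j - I(\mu_j) \to -I(\mu) = \int u\,d\mu$.

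The principal obstacle is that $u$ is only upper semicontinuous on $K$, so the weak-$*$ convergence $\mu_j \to \mu$ does not by itself control $\int u_j\,d\mu_j$. The key trick is the potential identity $p_{\mu_j} - p_\mu = F_j - (u_j - u)$, which rewrites the energy difference as two monotone integrals: the one against the fixed measure $\mu$ is handled by monotone convergence, while the uncontrolled one against $\mu_j$ is trapped between $0$ and the first via positive-definiteness of the logarithmic energy.
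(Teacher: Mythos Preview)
Your proof is correct. Both your argument and the paper's share the same opening ($F_j\to 0$ by evaluating the potential identity off $K$) and the same closing device (integrate $p_{\mu_j}+u_j=F_j$ against $\mu_j$ to get $F_j=I(\mu_j)+\int u_j\,d\mu_j$, so any two of the three limits imply the third). Both also rely on monotone convergence for $\int u_j\,d\mu$ and on the positive-definiteness of the logarithmic energy on signed measures of total mass zero.

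The difference is in the middle step. The paper squeezes the quantity $I(\mu_j)+2\int u_j\,d\mu_j$: the lower bound comes from the variational inequality $I(\nu)+2\int u\,d\nu\geq I(\mu)+2\int u\,d\mu$ (a rewriting of $I(\mu-\nu)\geq 0$) applied with $\nu=\mu_j$, while the upper bound comes from the \emph{minimization property} $I^{Q_j}(\mu_j)\leq I^{Q_j}(\mu)$ of the weighted equilibrium measure. You instead write down the exact identity
\[
I(\mu_j-\mu)=\int_K(u_j-u)\,d\mu-\int_K(u_j-u)\,d\mu_j
\]
and use $I(\mu_j-\mu)\geq 0$ together with nonnegativity of the second integral to sandwich $\int(u_j-u)\,d\mu_j$ between $0$ and $\int(u_j-u)\,d\mu\to 0$; you then recover $I(\mu_j)\to I(\mu)$ via the mutual energy. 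Your route never invokes the fact that $\mu_j$ minimizes the weighted energy, and it yields the stronger conclusion $I(\mu_j-\mu)\to 0$ (convergence in the energy metric) as a byproduct. The paper's route is a bit more in the spirit of the weighted potential theory it is advertising, since it uses the variational characterization of $\mu_{K,Q_j}$ explicitly.
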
 

\begin{proof} That $\lim_{j\to \infty} F_j =0$ follows immediately from (\ref{dummy1}), (\ref{dummy}) and (\ref{last4}). Noting that $I(\mu_j)=\int_K p_{\mu_j}d\mu_j$, this fact and (\ref{dummy}) imply that the conditions
$$\lim_{j\to \infty} I(\mu_j) = I(\mu) \ \hbox{and} \
\lim_{j\to \infty} \int_K u_j d\mu_j = \int_K ud\mu $$
are equivalent.

Since $u_{j+1}\leq u_j$, for all $\nu\in {\mathcal M}(K)$
$$I(\nu)+2\int_K u_{j+1} d\nu\leq I(\nu)+2\int_K u_{j} d\nu; \ \hbox{thus} \ I^{Q_{j+1}}(\mu_{j+1}) \leq I^{Q_j}(\mu_j)$$
and $\lim_{j\to \infty}I^{Q_j}(\mu_j)=\lim_{j\to \infty}[I(\mu_j) +2\int_K u_j d\mu_j ]$ exists. 

We claim that for all $\nu\in {\mathcal M}(K)$
 \begin{equation}\label{dummy11}I(\nu)+2\int_K u d\nu\geq I(\mu)+2\int_K u d\mu.\end{equation}
Clearly we need only show this for $\nu$ with $I(\nu)<+\infty$. Since $I(\mu)<+\infty$, and $\mu-\nu$ is a signed measure of total mass $0$ and of compact support, $I(\mu-\nu)\geq 0$ (cf., Lemma I.1.8 in \cite{ST}). Using 
$$-\int_K ud\nu=\int_K p_{\mu}d\nu=\int_K p_{\nu}d\mu \ \hbox{and} \ I(\mu)=\int_K p_{\mu}d\mu = -\int_K ud\mu$$ we have
$$0\leq I(\mu-\nu)= I(\mu) +I(\nu)-2\int_K p_{\mu}d\nu= I(\mu) +I(\nu)+2\int_K ud\nu$$
so that 
$$I(\nu)+2\int_K u d\nu\geq -I(\mu)=I(\mu) +2\int_K ud\mu$$
as desired. 

Since $u_j\geq u$,
 \begin{equation}\label{dummy8}I(\mu_j)+2\int_K u_j d\mu_j\geq I(\mu_j)+2\int_K u d\mu_j\geq I(\mu)+2\int_K u d\mu\end{equation}
 where the last inequality comes from taking $\nu = \mu_j$ in (\ref{dummy11}).

We have $I^{Q_j}(\mu_j)\leq I^{Q_j}(\mu)$ (since $\mu_j$ minimizes this weighted energy); by monotone convergence using (\ref{dummy1}),
$$I(\mu) +2\int_K u d\mu = \lim_{j\to \infty} [I(\mu)+ 2\int_K u_j d\mu]\geq  \lim_{j\to \infty} [I(\mu_j)+ 2\int_K u_j d\mu_j].$$
 Combining this with (\ref{dummy8}), we have
\begin{equation}\label{dummy12}I(\mu) +2\int_K u d\mu = \lim_{j\to \infty} [I(\mu)+ 2\int_K u_j d\mu]=\lim_{j\to \infty} [I(\mu_j)+ 2\int_K u_j d\mu_j].\end{equation}
We already know that
\begin{equation}\label{dummy13}\lim_{j\to \infty} F_j=\lim_{j\to \infty} [I(\mu_j)+ \int_K u_j d\mu_j]=0;\end{equation}
subtracting (\ref{dummy13}) from (\ref{dummy12}) gives the result:
$$\lim_{j\to \infty}\int_K u_j d\mu_j=I(\mu) +2\int_K u d\mu =\int_K u d\mu.$$
\end{proof}

As in \cite{bloomvoic}, our functionals $J(\mu),W(\mu)$ are related to $I(\mu)$.
\begin{proposition}\label{wisi} Let $\mu \in \mathcal M(K)$. Then 
\begin{equation}\label{iisw}-\log W(\mu)=-\log J(\mu)=\frac{1}{2}I(\mu).
\end{equation}
\end{proposition}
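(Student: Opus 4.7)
Since Theorem \ref{obsolete} already delivers $\log J(\mu) = \log W(\mu)$ in this univariate nonpluripolar setting, the plan is to prove the single identity $-\log W(\mu) = \tfrac{1}{2}I(\mu)$. I would split this into an upper bound for $W(\mu)$ obtained by direct truncation of the logarithmic kernel, and a reverse bound obtained from Case I of Theorem \ref{wobsolete} combined with approximation through Proposition \ref{laststep}.

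For the upper bound I would introduce the continuous, bounded truncation $\log_m|z-w| := \max(\log|z-w|, -m)$. Using $\log \leq \log_m$ in $\log|VDM_k({\bf a})| = \sum_{i<j}\log|a_i-a_j|$ and isolating the diagonal of $\kappa_k({\bf a}) := \tfrac{1}{k+1}\sum_j \delta_{a_j}$ gives
$$\frac{1}{k(k+1)}\log|VDM_k({\bf a})| \;\leq\; \frac{k+1}{2k}\int_{K}\!\!\int_{K} \log_m|z-w|\, d\kappa_k({\bf a})\, d\kappa_k({\bf a}) + \frac{m}{2k}.$$
Taking $\sup$ over ${\bf a} \in \tilde G_{N_k}$, then $\limsup_{k\to\infty}$, then $\inf$ over neighborhoods $G$ of $\mu$, and using the weak-$*$ continuity of $\nu \mapsto \int\!\int \log_m|z-w|\,d\nu\,d\nu$ (since $\log_m$ is continuous and bounded on $K\times K$), one obtains $\log W(\mu) \leq \tfrac{1}{2}\int\!\int \log_m|z-w|\,d\mu\,d\mu$ for each $m$. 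Letting $m\to\infty$ and applying monotone convergence to $-\log_m|z-w| \uparrow -\log|z-w|$ yields $\log W(\mu) \leq -\tfrac{1}{2}I(\mu)$, which simultaneously handles the case $I(\mu)=+\infty$.

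For the reverse inequality I would first treat $\mu = \mu_{K,v}$ with $v\in C(K)$. Combining (\ref{wnewstep}) with the classical univariate identity
$$\log \bar\delta^v(K) = -\tfrac{1}{2}I^v(\mu_{K,v}) = -\tfrac{1}{2}I(\mu_{K,v}) - \int_K v\, d\mu_{K,v}$$
(Chapter III of \cite{ST}, reconciled with our normalization via $\bar\delta^v(K)^2 = \delta^w(K)$ for $w = e^{-v}$) immediately gives $\log W(\mu_{K,v}) = -\tfrac{1}{2}I(\mu_{K,v})$. For an arbitrary $\mu \in \mathcal M(K)$ with $I(\mu) < +\infty$, I would then replicate the approximation step from Theorem \ref{wobsolete}: set $u = -p_\mu$, pick $Q_j \in C(K)$ with $Q_j \downarrow u$ on $K$, and put $\mu_j := \mu_{K,Q_j}$. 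Proposition \ref{laststep} delivers both $\mu_j \to \mu$ weak-$*$ and $\lim_j I(\mu_j) = I(\mu)$, so upper semicontinuity of $\mu \mapsto \log W(\mu)$ yields
$$\log W(\mu) \;\geq\; \limsup_{j\to\infty} \log W(\mu_j) \;=\; -\tfrac{1}{2}\lim_{j\to\infty} I(\mu_j) \;=\; -\tfrac{1}{2}I(\mu).$$

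The delicate point is the classical transfinite-diameter formula for $\log\bar\delta^v(K)$: our normalization differs from Saff--Totik's by a squaring, so the exponents must be tracked carefully, and one must ensure the identity is available for every $v \in C(K)$ (not only those making $V^*_{K,v}$ continuous). Once that identity is in hand, the remaining steps are routine weak-$*$ and monotone-convergence manipulations combined with Proposition \ref{laststep}.
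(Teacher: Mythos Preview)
Your proof is correct and follows essentially the same route as the paper: both establish the equality first for $\mu=\mu_{K,v}$ via (\ref{wnewstep}) and the classical weighted transfinite diameter formula $\log\bar\delta^v(K)=-\tfrac12 I^v(\mu_{K,v})$, then extend to finite-energy $\mu$ via the approximation $\mu_j=\mu_{K,Q_j}\to\mu$ together with Proposition \ref{laststep}, and handle $I(\mu)=+\infty$ by the upper bound $\log W(\mu)\le -\tfrac12 I(\mu)$. The only cosmetic differences are that you spell out the truncation argument for that upper bound (the paper simply cites Theorem 4.1(a) of \cite{bloomvoic}, which is the same argument), and you invoke only upper semicontinuity of $W$ in the approximation step rather than the full limit $\lim_j W(\mu_j)=W(\mu)$ extracted from the proof of Theorem \ref{wobsolete}---your version is in fact slightly cleaner since the independent upper bound makes the one-sided usc inequality sufficient.
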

\begin{proof} Observe that if $I(\mu)<+\infty$, the logarithmic potential function $p_{\mu}$ satisfies $u:=-p_\mu\in L(\CC)$ with $dd^cu =\mu$ and $\int_K u d\mu >-\infty$. In particular, if $\mu=\mu_{K,Q}$ for $Q\in  C(K)$, (\ref{newstep}) and Theorem III.1.3 of \cite{ST} show that 
$$-\log W(\mu_{K,Q})=\frac{1}{2}I(\mu_{K,Q})<+\infty.$$
For $\mu \in \mathcal M(K)$ with $I(\mu)<+\infty$, as in the proof of Theorem \ref{wobsolete}, there exists a sequence of continuous admissible weights $\{Q_j\}$ with $Q_j \downarrow -p_\mu$ on $K$, $\mu_{K,Q_j}\to \mu$ weak-* and 
 $$\lim_{j\to \infty} W(\mu_{K,Q_j})=W(\mu)$$
Proposition \ref{laststep} gives (\ref{iisw}) in this case. It remains to show that 
 \begin{equation}\label{last}I(\mu)=+\infty \ \hbox{implies} \ W(\mu)=0.\end{equation} The proof of Theorem 4.1 (a) of \cite{bloomvoic} that $\log W(\mu)\leq -\frac{1}{2}I(\mu)$ follows directly from the definition of these functionals and usc of $(\zeta,z)\to \log |z-\zeta|$. Thus (\ref{last}) holds.

\end{proof}

\begin{corollary} Let $Q\in  C(K)$. For $\mu \in \mathcal M(K)$
\begin{equation}\label{iqisqw}-\log W^Q(\mu)=-\log J^Q(\mu)=\frac{1}{2}I(\mu)+\int_KQd\mu=\frac{1}{2}I^Q(\mu).
\end{equation}

\end{corollary}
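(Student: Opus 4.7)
The plan is to derive this corollary as a direct consequence of Proposition \ref{wisi} combined with the scaling property (2) that relates the weighted and unweighted functionals, plus the definition of $I^Q$.

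First, I would recall property (2) established earlier in Section \ref{sec:eandp}: for any $Q \in C(K)$ and $\mu \in \mathcal{M}(K)$,
$$J(\mu) = J^Q(\mu) \cdot e^{\int_K Q d\mu} \quad \text{and} \quad W(\mu) = W^Q(\mu) \cdot e^{\int_K Q d\mu}.$$
Taking logarithms and rearranging yields
$$-\log J^Q(\mu) = -\log J(\mu) + \int_K Q d\mu, \qquad -\log W^Q(\mu) = -\log W(\mu) + \int_K Q d\mu.$$

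Next, I would invoke Proposition \ref{wisi}, which gives $-\log W(\mu) = -\log J(\mu) = \tfrac{1}{2} I(\mu)$. Substituting into the above identities immediately gives
$$-\log W^Q(\mu) = -\log J^Q(\mu) = \tfrac{1}{2} I(\mu) + \int_K Q d\mu.$$
Finally, by the definition $I^Q(\mu) := I(\mu) + 2\int_K Q d\mu$ recalled at the start of Section \ref{sec:fr}, the right-hand side equals $\tfrac{1}{2} I^Q(\mu)$, completing the chain of equalities.

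There is no real obstacle here; the corollary is essentially a bookkeeping exercise converting the unweighted statement of Proposition \ref{wisi} into its weighted form via the multiplicative relationship between $W, J$ and $W^Q, J^Q$. The one minor point worth noting is that property (2) in the paper is stated for $Q \in C(K)$, which is precisely the hypothesis of the corollary, so no extension is required.
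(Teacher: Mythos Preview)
Your proposal is correct and is precisely the argument the paper intends: the corollary is stated without proof because it follows immediately from Proposition \ref{wisi} via property (2) and the definition of $I^Q$, exactly as you have written.
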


Note that the rate function is 
$$\mathcal I_{K,Q}(\mu)=\frac{1}{2}[\log I^Q(\mu)-\log I^Q(\mu_{K,Q})]$$
and the speed is $2k^2$.

\begin{remark} \label{gueremark} The GUE has joint probability distribution 
$Prob_k$ on $\RR^{k+1}$ given by
$$Prob_k(A):=\frac{1}{Z_k}\cdot \int_A  |VDM_k^Q(x_1,...,x_{k+1
})|^2 \cdot dx_1\cdots dx_{k+1}$$
for a Borel set $A\subset \RR^{k+1}$ where $Q(x)=x^2$ and
$$Z_k= \int_{\RR^{k+1}}  |VDM_k^Q(x_1,...,x_{k+1
})|^2 \cdot dx_1\cdots dx_{k+1}.$$
Here, $\mu_{\RR,Q}=\frac{1}{\pi}\sqrt{2-x^2}dx$ on the interval $-\sqrt 2 \leq  x\leq \sqrt 2$.
\end{remark}

\bigskip

\end{document}